\documentclass[a4paper, 12pt]{article}

\usepackage[sort&compress]{natbib}
\bibpunct{(}{)}{;}{a}{}{,} 

\usepackage{amsthm, amsmath, amssymb, mathrsfs, multirow, url, subfigure}
\usepackage{graphicx} 
\usepackage{ifthen} 
\usepackage{amsfonts}
\usepackage[usenames]{color}
\usepackage{fullpage}

\numberwithin{equation}{section} 

\theoremstyle{plain} 
\newtheorem{thm}{\indent Theorem}
\newtheorem{cor}{\indent Corollary}
\newtheorem{prop}{\indent Proposition}
\newtheorem{lem}{\indent Lemma}

\theoremstyle{definition}
\newtheorem{defn}{\indent Definition}

\theoremstyle{remark} 
\newtheorem*{gex}{\indent Gaussian Example}
\newtheorem*{eex}{\indent Exponential Example}
\newtheorem*{pex}{\indent Poisson Example}
\newtheorem*{astep}{\indent A-step}
\newtheorem*{pstep}{\indent P-step}
\newtheorem*{cstep}{\indent C-step}

\newcommand{\prob}{\mathsf{P}} 
\newcommand{\E}{\mathsf{E}}

\newcommand{\bel}{\mathsf{bel}}
\newcommand{\pl}{\mathsf{pl}}

\newcommand{\pois}{{\sf Pois}}
\newcommand{\unif}{{\sf Unif}}
\newcommand{\nm}{{\sf N}}
\newcommand{\expo}{{\sf Exp}}
\newcommand{\gam}{{\sf Gamma}}
\newcommand{\stt}{{\sf t}}
\newcommand{\dir}{{\sf Dir}}

\newcommand{\RR}{\mathbb{R}}

\newcommand{\XX}{\mathbb{X}}
\newcommand{\UU}{\mathbb{U}}
\newcommand{\VV}{\mathbb{V}}
\newcommand{\TT}{\mathbb{T}}

\newcommand{\Xbar}{\overline{X}}
\newcommand{\xbar}{\overline{x}}

\renewcommand{\S}{\mathcal{S}}
\renewcommand{\SS}{\mathbb{S}}

\newcommand{\del}{\partial}
\renewcommand{\phi}{\varphi} 
\newcommand{\eps}{\varepsilon}


\title{Inferential models: A framework for prior-free posterior probabilistic inference}
\author{
Ryan Martin \\
Department of Mathematics, Statistics, and Computer Science \\
University of Illinois at Chicago \\
\url{rgmartin@math.uic.edu} \\
\mbox{} \\
Chuanhai Liu \\
Department of Statistics \\
Purdue University \\
\url{chuanhai@stat.purdue.edu}
}
\date{\today}

\begin{document}

\maketitle 

\begin{abstract}
Posterior probabilistic statistical inference without priors is an important but so far elusive goal.  Fisher's fiducial inference, Dempster--Shafer theory of belief functions, and Bayesian inference with default priors are attempts to achieve this goal but, to date, none has given a completely satisfactory picture.  This paper presents a new framework for probabilistic inference, based on \emph{inferential models} (IMs), which not only provides data-dependent probabilistic measures of uncertainty about the unknown parameter, but does so with an automatic long-run frequency calibration property.  The key to this new approach is the identification of an unobservable auxiliary variable associated with observable data and unknown parameter, and the prediction of this auxiliary variable with a random set before conditioning on data.  Here we present a three-step IM construction, and prove a frequency-calibration property of the IM's belief function under mild conditions.  A corresponding optimality theory is developed, which helps to resolve the non-uniqueness issue.  Several examples are presented to illustrate this new approach.  

\smallskip

\emph{Keywords and phrases:} Belief function; plausibility function; predictive random set; score function; validity.
\end{abstract}

\section{Introduction}
\label{S:intro}

In a statistical inference problem, one attempts to convert \emph{experience}, in the form of observed data, to \emph{knowledge} about the unknown parameter of interest.  The fact that observed data is surely limited implies that there will be some uncertainty in this conversion, and probability is a natural tool to describe this uncertainty.  But a statistical inference problem is different from the classical probability setting because everything---observed data and unknown parameter---is fixed, and so it is unclear where these probabilistic assessments of uncertainty should come from, and how they should be interpreted.  For example, the classical frequentist approach assigns probabilistic assessments of uncertainty (e.g., confidence levels) by considering repeated sampling from the super-population of possible data sets.  These uncertainty measures do not depend on the observed data, so their meaningfulness in a given problem is questionable.  The Bayesian approach, on the other hand, is able to produce meaningful data-dependent probabilistic measures of uncertainty, but the cost is that a prior probability distribution for the unknown parameter is required.  Early efforts to get probabilistic inference without prior specification include Fisher's fiducial inference \citep{zabell1992} and its variants \citep{hannig2009, hannig2012, hannig.lee.2009}, confidence distributions \citep{xie.singh.strawderman.2011, xie.singh.2012}, Fraser's structural inference \citep{fraser1968}, and the Dempster--Shafer theory \citep{dempster2008, shafer1976}.  These methods generate probabilities for inference, but these probabilities may not be easy to interpret, e.g., they may not be properly calibrated across users or experiments.  So recent efforts have focused on incorporating a frequentist element.  In particular, objective Bayes analysis with default/reference priors \citep{mghosh2011, bernardo1979, berger2006, bergerbernardosun2009} attempts to construct priors for which certain posterior inferences, such as credible intervals, closely match that of a frequentist \citep{fraser2011, fraser.reid.marras.yi.2010}.  Calibrated Bayes \citep{rubin1984,dawid1985,little2010} has similar motivations.  But difficulties remain in choosing good reference priors for high-dimensional problems so, despite these efforts, a fully satisfactory framework of objective Bayes inference has yet to emerge.  

The goal of this paper is to develop a new framework for statistical inference, called \emph{inferential models} (IMs).  The seeds for this idea were first planted in \citet{mzl2010} and \citet{zl2010}; here we formalize and extend these ideas towards a cohesive framework for statistical inference.  The jumping off point is a simple association of the observable data $X$ and unknown parameter $\theta \in \Theta$ with an unobservable auxiliary variable $U$.  For example, consider the simple signal plus noise model, $X = \theta + U$, where $U \sim \nm(0,1)$.  If $X=x$ is observed, then we know that $x = \theta + u^\star$, where $u^\star$ is some \emph{unobserved} realization of $U$.  From this it is clear that knowing $u^\star$ is equivalent to knowing $\theta$.  So the IM approach attempts to accurately predict the value $u^\star$ before conditioning on $X=x$.  The benefit of focusing on $u^\star$ rather than $\theta$ is that more information is available about $u^\star$: indeed, all that is known about $\theta$ is that it sits in $\Theta$, while $u^\star$ is known to be a realization of a draw $U$ from an \emph{a priori} distribution, in this case $\nm(0,1)$, that is fully specified by the postulated sampling model.  However, this \emph{a priori} distribution alone is insufficient for accurate prediction of $u^\star$.  Therefore, we adopt a so-called \emph{predictive random set} for predicting $u^\star$, which amounts to a sort of ``smearing'' of this distribution for $U$.  When combined with the association between observed data, parameters, and auxiliary variables, these random sets produce prior-free, data-dependent probabilistic assessments of uncertainty about $\theta$.  

To summarize, an IM starts with an association between data, parameters, and auxiliary variables and a predictive random set, and produces prior-free, post-data probabilistic measures of uncertainty about the unknown parameter.  The following associate-predict-combine steps provide a simple yet formal IM construction.  The details of each of these three steps will be fleshed out in Section~\ref{S:im}.  

\begin{astep}
Associate the unknown parameter $\theta$ to each possible $(x,u)$ pair to obtain a collection of sets $\Theta_x(u)$ of candidate parameter values.  
\end{astep}

\begin{pstep}
Predict $u^\star$ with a valid predictive random set $\S$.  
\end{pstep}

\begin{cstep}
Combine $X=x$, $\Theta_x(u)$, and $\S$ to obtain a random set $\Theta_x(\S) = \bigcup_{u \in \S} \Theta_x(u)$.  Then, for any assertion $A \subseteq \Theta$, compute the probability that the random set $\Theta_x(\S)$ is a subset of $A$ as a measure of the available evidence in $x$ supporting $A$.  
\end{cstep}

The A-step is meant to emphasize the use of unobservable but predictable auxiliary variables in the statistical modeling step.  These auxiliary variables make it possible to introduce posterior probability-like quantities without a prior distribution for $\theta$.  The P-step is new and unique to the inferential model framework.  The key is that $\Theta_x(\S)$ contains the true $\theta$ if and only if $\S$ contains $u^\star$.  Then the validity condition in the P-step ensures that $\S$ will hit its target with large probability which, in turn, guarantees that probabilistic output from the C-step has a desirable frequency-calibration property.  This, together with its dependence on the observed data $x$, makes the IM's probabilistic output meaningful both within and across experiments.  

The remainder of the paper is organized as follows.  Section~\ref{S:im} provides the details of the IM analysis, specifically the three-step construction outlined above, as well as a description of calculation and interpretation of the IM output: a posterior belief function.  These ideas are illustrated with a simple Poisson mean example.  After arguing, in Section~\ref{S:im}, that the IM output provides a meaningful summary of one's uncertainty about $\theta$ after seeing $X=x$, we prove a frequency calibration property of the posterior belief functions in Section~\ref{S:validity} which establishes the meaningfulness of the posterior belief function across different users and experiments.  As a consequence of this frequency-calibration property, we show in Section~\ref{SS:freq} that the IM output can easily be used to design new frequentist decision procedures having the desired control on error probabilities, etc.  Some basic but fundamental results on IM optimality are presented in Section~\ref{S:optimality}.  Section~\ref{S:more-examples} gives IM-based solutions to two non-trivial examples, both involving some sort of marginalization.  Nonetheless, these examples are relatively simple and they illustrate the advantages of the IM approach.  Concluding remarks are given in Section~\ref{S:discuss}, and R codes for the examples are available on the first author's website: \url{www.math.uic.edu/~rgmartin}.

\section{Inferential models}
\label{S:im}

\subsection{Auxiliary variable associations}
\label{SS:association}

If $X$ denotes the observable sample data, then the sampling model is a probability distribution $\prob_{X|\theta}$ on the sample space $\XX$, indexed by a parameter $\theta \in \Theta$.  Here $X$ may consist of a collection of $n$ (possibly vector-valued) data points, in which case both $\prob_{X|\theta}$ and $\XX$ would depend on $n$.  The sampling model for $X$ is induced by an auxiliary variable $U$, for given $\theta$.  Let $\UU$ be an (arbitrary) auxiliary space, equipped with a probability measure $\prob_U$.  In applications, $\UU$ can often be a unit hyper-cube and $\prob_U$ Lebesgue measure.  The sampling model $\prob_{X|\theta}$ shall be determined by the following ``algorithm:''
\begin{equation}
\label{eq:sm}
\text{sample $U \sim \prob_U$ and set $X = a(U,\theta)$},
\end{equation}
for an appropriate mapping $a: \UU \times \Theta \to \XX$.  The key is the association of the observable $X$, the unknown $\theta$, and the auxiliary variable $U$ through the relation $X = a(U,\theta)$.  This particular formulation of the sampling model is not really a restriction.  In fact, the two-step construction of the observable $X$ in \eqref{eq:sm} is often consistent with scientific understanding of the underlying process under investigation; linear models form an interesting class of examples.  As another example, suppose $X = (X_1,\ldots,X_n)$ consists of an independent sample from a continuous distribution.  If the corresponding distribution function $F_\theta$ is invertible, then $a(\theta, U)$ may be written as 
\begin{equation}
\label{eq:iid.amodel}
a(\theta,U) = \bigl( F_\theta^{-1}(U_1),\ldots,F_\theta^{-1}(U_n) \bigr),  
\end{equation}
where $U=(U_1,\ldots,U_n)$ is a set of independent $\unif(0,1)$ random variables.  

The notation $X = a(\theta,U)$ chosen to represent the association between $(X,\theta,U)$ is just for simplicity.  In fact, this association need not be described by a formal equation.  As the Poisson example below shows, all we need is a recipe, like that in \eqref{eq:sm}, describing how to produce a sample $X$, for a given $\theta$, based on a realization $U \sim \prob_U$.  

\begin{gex}
Consider the problem of inference on the mean $\theta$ based on a single sample $X \sim \nm(\theta,1)$.  In this case, the association linking $X$, $\theta$, and an auxiliary variable $U$ may be written as $U = \Phi(X - \theta)$ or, equivalently, $X = \theta + \Phi^{-1}(U)$, where $U \sim \unif(0,1)$, and $\Phi$ is the standard Gaussian distribution function.  
\end{gex}

\begin{pex}
Consider the problem of inference on the mean $\theta$ of a Poisson population based on a single observation $X$.  For this discrete problem, the association for $X$, given $\theta$, may be written as 
\begin{equation}
\label{eq:pois-sm}
F_\theta(X-1) \leq 1-U < F_\theta(X), \quad U \sim \unif(0,1), 
\end{equation}
where $F_\theta$ denotes the $\pois(\theta)$ distribution function.  This representation is familiar for simulating $X \sim \pois(\theta)$, i.e., one can first sample $U \sim \unif(0,1)$ and then choose $X$ so that the inequalities in \eqref{eq:pois-sm} are satisfied.  But here we also interpret \eqref{eq:pois-sm} as a means to link data, parameter, and auxiliary variable.  
\end{pex}

It should not be surprising that, in general, there are many associations for a given sampling model.  In fact, for a given sampling model $\prob_{X|\theta}$, there are as many associations as there are triplets $(\UU,\prob_U,a)$ such that $\prob_{X|\theta}$ equals the push-forward measure $\prob_U a_\theta^{-1}$, with $a_\theta(\cdot) = a(\theta,\cdot)$.  For example, if $X \sim \nm(\theta,1)$, then each of the following defines an association: $X=\theta+U$ with $U \sim \nm(0,1)$, $X=\theta+\Phi^{-1}(U)$ with $U \sim \unif(0,1)$, and 
\[ X = \begin{cases} \theta + U & \text{if $\theta \geq 0$}, \\ \theta - U & \text{if $\theta < 0$}, \end{cases} \quad \text{with $U \sim \nm(0,1)$}. \]
Presently, there appears to be no strong reason to choose one of these associations over the other.  However, the optimality theory presented in Section~\ref{S:optimality} helps to resolve this non-uniqueness issue, that is, the optimal IM depends only on the sampling model, and not on the chosen association.  From a practical point of view, we prefer, for continuous data problems, associations which are continuous in both $\theta$ and $U$, which rules out the latter of the three associations above.  Also, we tend to prefer the representation with a uniform $U$, any other choice being viewed as just a reparametrization of this one.  It will become evident that this view is without loss of generality for simple problems with a one-dimensional auxiliary variable.  The case when $U$ is moderate- to high-dimensional is more challenging and we defer its discussion to Section~\ref{S:discuss}.

\subsection{Three-step IM construction}
\label{SS:three.step}

\subsubsection{Association step}
\label{SSS:astep}

The association \eqref{eq:sm} plays two distinct roles.  Before the experiment, the association characterizes the predictive probabilities of the observable $X$.  But once $X = x$ is observed, the role of the association changes.  The key idea is that the observed $x$ and the unknown $\theta$ must satisfy 
\begin{equation}
\label{eq:aeqn}
x = a(u^\star,\theta)
\end{equation}
for some unobserved realization $u^\star$ of $U$.  Although $u^\star$ is unobserved, there is information available about the nature of this quantity; in particular, we know exactly the distribution $\prob_U$ from which it came.  

Of course, the value of $u^\star$ can never be known, \emph{but if it were}, the inference problem would be simple: given $X=x$, just solve the equation $x = a(u^\star,\theta)$ for $\theta$.  More generally, one could construct the set of solutions $\Theta_x(u^\star)$, where 
\begin{equation}
\label{eq:inverse1}
\Theta_x(u) = \{\theta: x = a(u,\theta)\}, \quad x \in \XX, \quad u \in \UU. 
\end{equation}
For continuous-data problems, $\Theta_x(u)$ is typically a singleton for each $u$; for other problems, it could be a set.  In either case, given $X=x$, $\Theta_x(u^\star)$ represents the best possible inference in the sense that \emph{the true $\theta$ is guaranteed to be in} $\Theta_x(u^\star)$.  

\begin{gex}[cont]
The Gaussian mean problem is continuous, so the association $x = \theta + \Phi^{-1}(u)$ identifies a single $\theta$ for each fixed $(x,u)$ pair.  Therefore, $\Theta_x(u) = \{x-\Phi^{-1}(u)\}$.  In this case, clearly, if $u^\star$ were somehow observed, then the true $\theta$ could be determined with complete certainty.  
\end{gex}

\begin{pex}[cont]
Integration-by-parts reveals that the $\pois(\theta)$ distribution function $F_\theta$ satisfies $F_\theta(x) = 1-G_{x+1}(\theta)$, where $G_a$ is a $\gam(a,1)$ distribution function.  Therefore, from \eqref{eq:pois-sm}, we get the $u$-interval $G_{x+1}(\theta) < u \leq G_x(\theta)$.  Inverting this $u$-interval produces the following $\theta$-interval:
\begin{equation}
\label{eq:pois-aeqn-t-mapping}
 \Theta_x(u) = \bigl( G_x^{-1}(u), G_{x+1}^{-1}(u) \bigr].  
\end{equation}
If $u^\star$ was available, then $\Theta_x(u^\star)$ would provide the best possible inference in the sense that the true value of $\theta$ is guaranteed to sit inside this interval.  But even in this ideal case there is no information available to identify the exact location of $\theta$ in $\Theta_x(u^\star)$.  
\end{pex}

\subsubsection{Prediction step}
\label{SSS:pstep}

The above discussion highlights the importance of the auxiliary variable for inference.  It is, therefore, only natural that the inference problem should focus on accurately predicting the unobserved $u^\star$.  To predict $u^\star$ with a certain desired accuracy, we employ a so-called a \emph{predictive random set}.  First we give the simplest description of a predictive random set and provide a useful example.  More general descriptions will be given later. 

Let $u \mapsto S(u)$ be a mapping from $\UU$ to a collection of $\prob_U$-measurable subsets of $\UU$; one decent example of such a mapping $S$ is given in equation \eqref{eq:default.prs} below.  Then the predictive random set $\S$ is obtained by applying the set-valued mapping $S$ to a draw $U \sim \prob_U$, i.e., $\S = S(U)$ with $U \sim \prob_U$.  The intuition is that if a draw $U \sim \prob_U$ is a good prediction for the unobserved $u^\star$, then the random set $\S = S(U)$ should be even better in the sense that there is high probability that $\S \ni u^\star$.  

\begin{gex}[cont]
In this example we may predict the unobserved $u^\star$ with a predictive random set $\S$ defined by the set-valued mapping 
\begin{equation}
\label{eq:default.prs}
S(u) = \bigl\{ u' \in (0,1): |u' - 0.5| < |u-0.5| \bigr\}, \quad u \in (0,1).  
\end{equation}
As this predictive random set is designed to predict an unobserved uniform variate, we may also employ \eqref{eq:default.prs} in other problems, including the Poisson example.  
\end{gex}

There are, of course, other choices of $S(u)$, e.g., $[0,u)$, $(u,1]$, $(0.5u, 0.5 + 0.5u)$ and more.  Although some other choice of $\S=S(U)$ might perform slightly better depending on the assertion of interest, \eqref{eq:default.prs} seems to be a good default choice, provided that the association satisfies certain monotonicity conditions.  See Sections~\ref{S:validity} and \ref{S:optimality} for more on the choice predictive random sets. 

For the remainder of this paper, we shall mostly omit the set-valued mapping $S$ from the notation and speak directly about the predictive random set $\S$.  That is, the predictive random set $\S$ will be just a random subset of $\UU$ with distribution $\prob_{\S}$.  In the above description, $\prob_{\S}$ is just the push-forward measure $\prob_U S^{-1}$.

\subsubsection{Combination step}
\label{SSS:cstep}

For the time being, let us assume that the predictive random set $\S$ is satisfactory for predicting the unobserved $u^\star$; this is actually easy to arrange, but we defer discussion until Section~\ref{S:validity}.  To transfer the available information about $u^\star$ to the $\theta$-space, our last step is to combine the information in the association, the observed $X=x$, and the predictive random set $\S$.  The intuition is that, if $u^\star \in \S$, then the true $\theta$ must be in the set $\Theta_x(u)$, from \eqref{eq:inverse1}, for at least one $u \in \S$.  So, logically, it makes sense to consider, for inference about $\theta$, the expanded set 
\begin{equation}
\label{eq:inverse2}
\Theta_x(\S) = \textstyle\bigcup_{u \in \S} \Theta_x(u).
\end{equation}
The set $\Theta_x(\S)$ contains those values of $\theta$ which are consistent with the observed data and sampling model for at least one candidate $u^\star$ value $u \in \S$.  Since $\theta \in \Theta_x(\S)$ if and only if the unobserved $u^\star \in \S$, if we are willing to accept that the predictive random set $\S$ is satisfactory for predicting $u^\star$, then $\Theta_x(\S)$ will do equally well at capturing $\theta$.  

Now consider an assertion $A$ about the parameter of interest $\theta$.  Mathematically, an assertion is just a subset of $\Theta$, but it acts much like a hypothesis in the context of classical statistics.  To summarize the evidence in $x$ that supports the assertion $A$, we calculate the probability that $\Theta_x(\S)$ is a subset of $A$, i.e., 
\begin{equation}
\label{eq:lev}
\bel_x(A) = \prob_\S\{\Theta_x(\S) \subseteq A \mid \Theta_x(\S) \neq \varnothing\}.  
\end{equation}
We refer to $\bel_x(A)$ as the \emph{belief function} at $A$.  Naturally, $\bel_x$ also depends on the choice of association and predictive random set, but for now we suppress this dependence in the notation.  There are some similarities between our belief function and that of Dempster--Shafer theory \citep{shafer1976}.  For example, $\bel_x$ is subadditive in the sense that if $A$ is a non-trivial subset of $\Theta$, then $\bel_x(A) + \bel_x(A^c) \leq 1$ with equality if and only if $\Theta_x(\S)$ is a singleton with $\prob_{\S}$-probability~1.  However, our use of the predictive random set (and our emphasis on validity in Section~\ref{S:validity}) separates our approach from that of Dempster--Shafer; see \citet{mzl2010}.  

Here we make two technical remarks about the belief function in \eqref{eq:lev}.  First, in the problems considered in this paper, the case $\Theta_x(\S) = \varnothing$ is a $\prob_\S$-null event, so the belief function can be simplified as $\bel_x(A) = \prob_\S\{\Theta_x(\S) \subseteq A\}$, no conditioning.  This simplification may not hold in problems where the observation $X=x$ can induce a constraint on the auxiliary variable $u$.  For example, consider the Gaussian example from above, but suppose that the mean is known to satisfy $\theta \geq 0$.  In this case, it is easy to check that $\Theta_x(\S) = \varnothing$ iff $\Phi^{-1}(\inf \S) > x$, an event which generally has positive $\prob_\S$-probability.  So, in general, we can ignore conditioning provided that 
\begin{equation}
\label{eq:constraint}
\Theta_x(u) \neq \varnothing \quad \text{for all $x$ and all $u$}.
\end{equation}
The IM framework can be modified in cases where \eqref{eq:constraint} fails, but we will not discuss this here; see \citet{leafliu2012}.  Second, measurability of $\bel_x(A)$, as a function of $x$ for given $A$, which is important in what follows, is not immediately clear from the definition and should be assessed case-by-case.  However, in our experience and in all examples herein, $\bel_x(A)$ is a nice measurable function of $x$.

Unlike with an ordinary additive probability measure, to reach conclusions about $A$ based on $\bel_x$ one must know \emph{both} $\bel_x(A)$ and $\bel_x(A^c)$; for example, in the extreme case of ``total ignorance'' about $A$, one has $\bel_x(A)=\bel_x(A^c) = 0$.  It is often more convenient to work with a different but related function 
\begin{equation}
\label{eq:uev}
\pl_x(A) = 1-\bel_x(A^c) = \prob_\S\{\Theta_x(\S) \not\subseteq A^c \mid \Theta_x(\S) \neq \varnothing\}, 
\end{equation}
called the \emph{plausibility function} at $A$; when $A=\{\theta\}$ is a singleton, we write $\pl_x(\theta)$ instead of $\pl_x(\{\theta\})$.  From the subadditivity of the belief function, it follows that $\bel_x(A) \leq \pl_x(A)$ for all $A$.  In what follows, to summarize the evidence in $x$ supporting $A$, we shall report the pair $\bel_x(A)$ and $\pl_x(A)$, also known as lower and upper probabilities. 

\begin{gex}[cont]
With the predictive random set $\S$ in \eqref{eq:default.prs}, the random set $\Theta_x(\S)$ is given by
\begin{align*}
\Theta_x(\S) & = \textstyle\bigcup_{u \in \S} \{x - \Phi^{-1}(u)\} \\
& = \bigl( x - \Phi^{-1}(0.5 + |U-0.5|), \, x - \Phi^{-1}(0.5 - |U-0.5|) \bigr) \\
& = \bigl( \underline{\Theta}_x(U),\, \overline{\Theta}_x(U) \bigr), \quad \text{say},
\end{align*}
where $U \sim \unif(0,1)$.  For a singleton assertion $A = \{\theta\}$, it is easy to see that the belief function is zero.  But the plausibility function is 
\begin{align}
\pl_x(\theta) & = 1 - \prob_U\bigl\{\underline{\Theta}_x(U) > \theta\bigr\} - \prob_U\bigl\{\overline{\Theta}_x(U) < \theta\bigr\} \notag \\
& = 1 - |2\Phi(x-\theta) - 1|. \label{eq:gauss.pl}
\end{align}
A plot of $\pl_x(\theta)$, with $x=5$, as a function of $\theta$, is shown in Figure~\ref{fig:pois1}(a).  The symmetry around the observed $x$ is apparent, and all those $\theta$ values in a neighborhood of $x=5$ are relatively plausible.  See Section~\ref{SS:freq} for more statistical applications of this graph.  
\end{gex} 

\begin{pex}[cont]
With the same predictive random set as in the previous example, the random set $\Theta_x(\S)$ is given by 
\begin{align*}
\Theta_x(\S) & = \textstyle\bigcup_{u \in \S} \bigl( G_x^{-1}(u), G_{x+1}^{-1}(u)\bigr] \\
& = \bigl( G_x^{-1}\bigl( 0.5 - |U-0.5| \bigr), G_{x+1}^{-1}\bigl( 0.5 + |U-0.5| \bigr) \bigr) \\
& = \bigl( \underline{\Theta}_x(U),\, \overline{\Theta}_x(U) \bigr), \quad \text{say},
\end{align*}
where $U$ is a random draw from $\unif(0,1)$.  For a singleton assertion $A = \{\theta\}$, again the belief function is zero, but the plausibility function is 
\begin{align}
\pl_x(\theta) & = 1 - \prob_U\bigl\{\underline{\Theta}_x(U) > \theta\bigr\} - \prob_U\bigl\{\overline{\Theta}_x(U) < \theta\bigr\} \notag \\
& = 1 - \max\{1-2G_x(\theta), 0\} - \max\{2G_{x+1}(\theta)-1,0\}. \label{eq:pois-evid}
\end{align}
A graph of $\pl_x(\theta)$, with $x=5$, as a function of $\theta$ is shown in Figure~\ref{fig:pois1}(b).  The plateau indicates that no $\theta$ values in a neighborhood of 5 can be ruled out.  Like in the Gaussian example, $\theta$ values in an interval around 5 are all relatively plausible.  

\citet{dempster2008} gives a different analysis of this same Poisson problem.  His plausibility function for the singleton assertion $A=\{\theta\}$ is $r_x(\theta) = e^{-\theta} \theta^x/x!$, which is the Poisson mass function treated as a function of $\theta$.  This function has a similar shape to that in Figure~\ref{fig:pois1}(b), but the scale is much smaller.  For example, $r_5(5) = 0.175$, suggesting that the assertion $\{\theta=5\}$ is relatively implausible, even though $X=5$ was observed.  Compare this to $\pl_5(5)=1$.  We would argue that, if $X=5$ is observed, then no plausibility function threshold should be able to rule out $\{\theta=5\}$; in that case, $\pl_5(5) = 1$ makes more sense.  Furthermore, as Dempster's analysis is similar to ours but with an invalid predictive random set, namely, $\S = \{U\}$, with $U \sim \unif(0,1)$, the corresponding plausibility function is not properly calibrated for all assertions.  
\end{pex}

\begin{figure}
\begin{center}
\subfigure[Gaussian example]{\scalebox{0.6}{\includegraphics{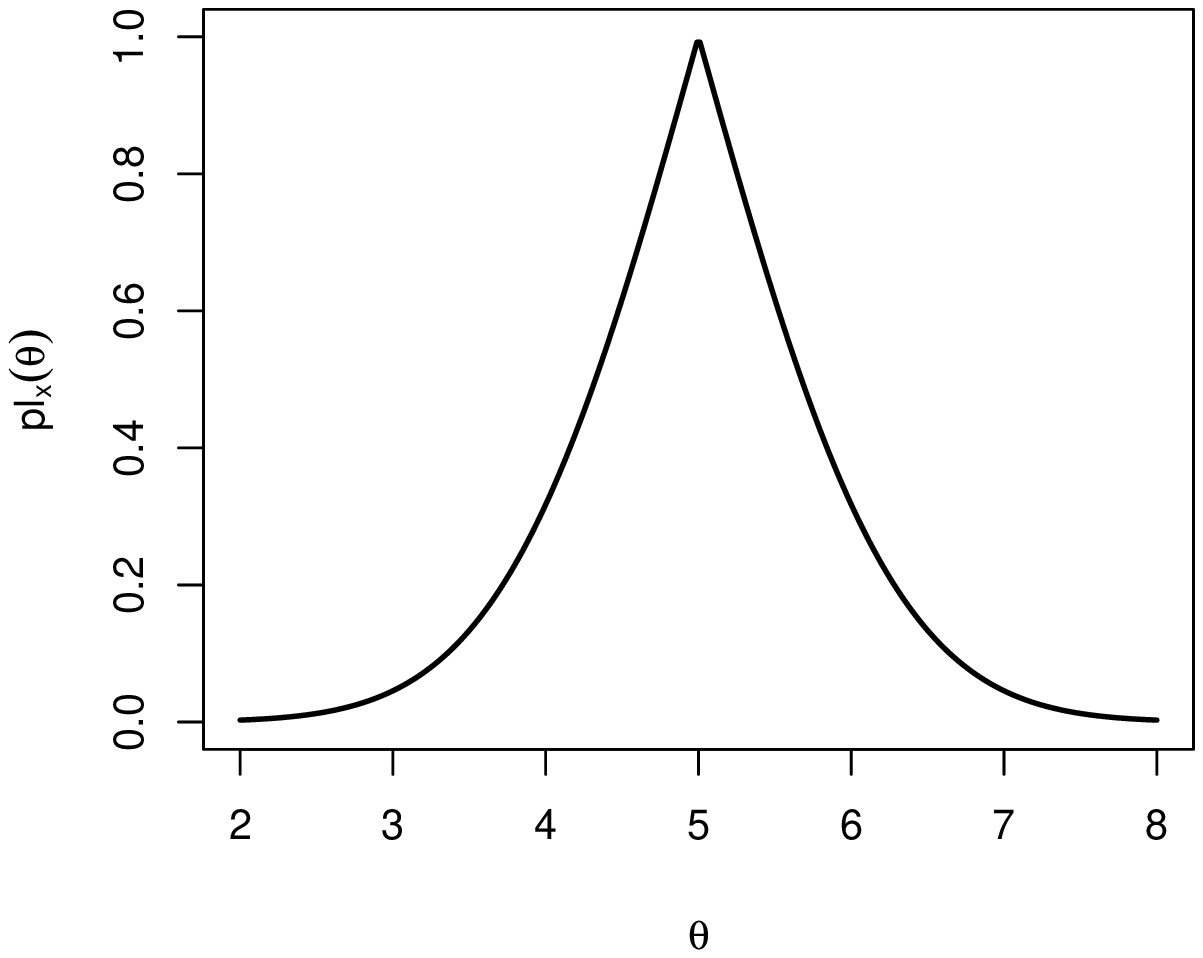}}}
\subfigure[Poisson example]{\scalebox{0.6}{\includegraphics{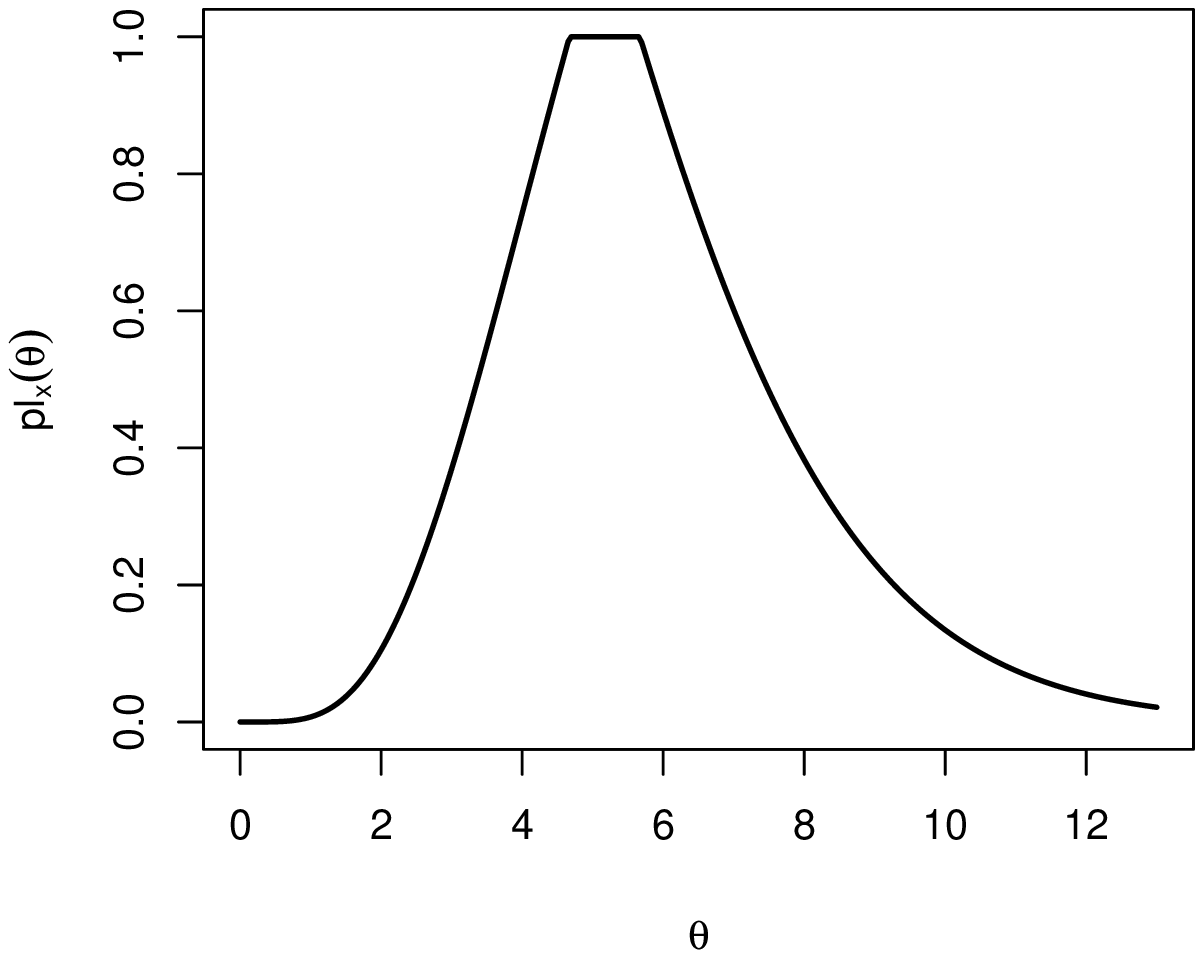}}}
\caption{Plot of the plausibility functions $\pl_x(\theta)$, as functions of $\theta$, in (a) the Gaussian example and (b) the Poisson example.  In both cases, $X=5$ is observed.}
\label{fig:pois1}
\end{center}
\end{figure}

\subsection{Interpretation of the belief function}
\label{SS:interpretation}

It is clear that the belief function depends on the observed $x$ and so must be meaningful within the problem at hand.  But while it is data-dependent, $\bel_x(A)$ is not a posterior probability for $A$ in the familiar Bayesian sense.  In fact, under our assumption that $\theta$ is fixed and non-random, there can be no non-trivial posterior distribution on $\Theta$.  The way around this limitation is to drop the requirement that posterior inference be based on a bona fide probability measure \citep[e.g.,][]{wasserman1990, walley1996, heathsudderth1978}.  Therefore, we recommend interpreting $\bel_x(A)$ and $\bel_x(A^c)$ as degrees of belief, rather than ordinary probabilities, even though they manifest from $\prob_U$-probability calculations.  More precisely, $\bel_x(A)$ and $\bel_x(A^c)$ represent the knowledge gained about the respective claims $\theta \in A$ and $\theta \not\in A$ based on both the observed $x$ and prediction of the auxiliary variable.  

\subsection{Summary}
\label{SS:summary}

The familiar sampling model appears in the A-step, but it is the corresponding association which is of primary importance.  This association, in turn, determines the auxiliary variable which is to be the focus of the IM framework.  We propose to predict the unobserved value of this auxiliary variable in the P-step with a predictive random set $\S$, which is chosen to have certain desirable properties (see Definition~\ref{def:credible} below).  This use of a predictive random set is likely the aspect of the IM framework which is most difficult to swallow, but the intuition should be clear: one cannot hope to accurately predict a fixed value $u^\star$ by an ordinary continuous random variable.  With the association, predictive random set, and observed $X=x$ in hand, one proceeds to the C-step where a random set $\Theta_x(\S)$ on the parameter space is obtained.  As this random set corresponds to a set of ``reasonable'' $\theta$ values, given $x$, it is natural to summarize the support of an assertion $A$ by the probability that $\Theta_x(\S)$ is a subset of $A$.  This probability is exactly the belief function that characterizes the output of the IM and an argument is presented that justifies the meaningfulness of $\bel_x(A)$ and $\pl_x(A)$ as summaries of the evidence in favor of $A$.  

Finally, we mention that the predictive random set $\S$ can depend on the assertion $A$ of interest.  That is, one might consider using one predictive random set, say $\S_A$, to evaluate $\bel_x(A)$, and another predictive random set, say $\S_{A^c}$, to evaluate $\pl_x(A) = 1-\bel_x(A^c)$.  In Section~\ref{S:optimality} we show that this is actually a desirable strategy, in the sense that the optimal predictive random set depends on the assertion in question.  In what follows, this dependence of the predictive random set on the assertion will be kept implicit.

\section{Theoretical validity of IMs}
\label{S:validity}

\subsection{Intuition}
\label{SS:validity.intuition}

In Section~\ref{S:im} we argued that $\bel_x(A;\S)$ and $\pl_x(A;\S)$ together provide a meaningful summary of evidence in favor of $A$ for the given $X=x$; our notation now explicitly indicates the dependence of these function on the predictive random set $\S$.  In this section we show that $\bel_X(A;\S)$ and $\pl_X(A;\S)$ are also meaningful as functions of the random variable $X \sim \prob_{X|\theta}$ for a fixed assertion $A$.  For example, we show that $\bel_X(A)$ is frequency-calibrated in the following sense: if $\theta \not\in A$, then $\prob_{X|\theta}\{\bel_X(A;\S) \geq 1-\alpha\} \leq \alpha$ for each $\alpha \in [0,1]$.  In other words, the amount of evidence in favor of a false $A$ can be large with only small probability.  This property means that the belief function is appropriately scaled for objective scientific inference.  A similar property also holds for $\pl_X(A)$.  We refer to this frequency-calibration property as \emph{validity} (Definition~\ref{def:validity}).  \citet{bernardo1979}, \citet{rubin1984} and \citet{dawid1985} give similar investigations of frequency-calibration and of objective priors for Bayesian inference.

\subsection{Predictive random sets}
\label{SS:credible}

We start with a few definitions, similar to those found in \citet{mzl2010} and \citet{zl2010}.  Set $Q_{\S}(u) = \prob_{\S}\{ \S \not\ni u\}$, for $u \in \UU$, which is the probability that the predictive random set $\S$ misses the specified target $u$.  Ideally, $\S$ will be such that the random variable $Q_{\S}(U)$, a function of $U \sim \prob_U$, will be probabilistically small.  This suggests a connection between $\prob_\S$ and $\prob_U$ which will be made precise in Theorem~\ref{thm:nested.prs}.  

\begin{defn}
\label{def:credible}
A predictive random set $\S$ is \emph{valid} for predicting the unobserved auxiliary variable if $Q_\S(U)$, as a function of $U \sim \prob_U$, is stochastically no larger than $\unif(0,1)$, i.e., for each $\alpha \in (0,1)$, $\prob_U\{Q_{\S}(U) \geq 1-\alpha\} \leq \alpha$.  If ``$\leq \alpha$'' can be replaced by ``$=\alpha$,'' then $\S$ is \emph{efficient}.  
\end{defn}

In words, validity of $\S$ implies that the probability that it misses a target $u$ is large for only a small $\prob_U$-proportion of possible $u$ values.  The predictive random set $\S$ defined by the mapping \eqref{eq:default.prs} is both valid and efficient.  Indeed, it is easy to check that, in this case, $Q_{\S}(u) = |2u-1|$.  Therefore, if $U \sim \unif(0,1)$ then $Q_{\S}(U) \sim \unif(0,1)$ too.  Corollary~\ref{thm:prs} below gives a simple and general recipe for constructing a valid and efficient $\S$.  

There is an important and apparently fundamental concept related to validity of predictive random sets, namely, \emph{nesting}.  We say that a collection of sets $\SS \subseteq 2^{\UU}$ is nested if, for any pair of sets $S$ and $S'$ in $\SS$, we have $S \subseteq S'$ or $S' \subseteq S$.  We shall also implicitly assume, without loss of generality, that $\prob_U(S) > 0$ for some $S \in \SS$; the user can easily arrange this.  The following theorem shows that if the predictive random set $\S$ is nested, i.e., if $\S$ is supported on a nested collection of sets $\SS$, then it is valid.  

\begin{thm}
\label{thm:nested.prs}
Let $\SS \subseteq 2^{\UU}$ be a nested collection of $\prob_U$-measurable subsets $S$ of $\UU$.  
Define a predictive random set $\S$ with distribution $\prob_\S$, supported on $\SS$, such that
\[ \prob_\S\{\S \subseteq K\} = \sup_{S \in \SS: S \subseteq K} \overline\prob_U(S), \quad K \subseteq \UU, \]
where $\overline\prob_U(\cdot) = \prob_U(\cdot) / \sup_{S\in\SS} \prob_U(S)$.  Then $\S$ is valid in the sense of Definition~\ref{def:credible}.  
\end{thm}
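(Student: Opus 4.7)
The plan is to turn the sup-based definition of $\prob_\S$ into an explicit formula for $Q_\S(u) = \prob_\S\{\S \not\ni u\}$ and then analyze its push-forward distribution under $U \sim \prob_U$. First I would observe that $\{\S \not\ni u\}$ and $\{\S \subseteq \UU \setminus \{u\}\}$ are the same event, so plugging $K = \UU \setminus \{u\}$ into the theorem's defining formula yields
\[ Q_\S(u) = \sup\bigl\{\overline\prob_U(S) : S \in \SS,\; u \notin S\bigr\}. \]
Validity then reduces to showing that this explicit set-function, evaluated at a draw $U \sim \prob_U$, is stochastically dominated by $\unif(0,1)$.

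My next step is to exploit nestedness to rewrite the sublevel sets of $Q_\S$ as intersections inside $\SS$. For $\beta \in [0,1)$, set
\[ B_\beta = \bigcap\bigl\{ S \in \SS : \overline\prob_U(S) > \beta \bigr\}. \]
A short set manipulation gives $\{u : Q_\S(u) \leq \beta\} = B_\beta$, since $u \in B_\beta$ exactly negates the existence of a ``large'' $S \in \SS$ missing $u$. To compute $\prob_U(B_\beta)$, I would extract a decreasing sequence $S_n \in \SS$ with $\overline\prob_U(S_n) \downarrow \beta$. Nestedness together with monotonicity of $\prob_U$ forces $S \supseteq S_n$ for all sufficiently large $n$ whenever $\overline\prob_U(S) > \beta$, so the countable intersection $\bigcap_n S_n$ coincides with $B_\beta$; continuity of $\prob_U$ from above then delivers $\prob_U(B_\beta) = \lim_n \prob_U(S_n) = \beta$, provided we take on board the implicit no-loss-of-generality assumption $\sup_{S\in\SS} \prob_U(S) = 1$ so that the normalization built into $\overline\prob_U$ returns ordinary probabilities.

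To close the argument, I would note that $\{Q_\S(U) < 1-\alpha\}$ is the increasing union $\bigcup_n B_{1-\alpha-1/n}$, so continuity of $\prob_U$ from below gives $\prob_U\{Q_\S(U) < 1-\alpha\} = 1-\alpha$, hence $\prob_U\{Q_\S(U) \geq 1-\alpha\} \leq \alpha$, which is exactly the validity condition of Definition~\ref{def:credible}. The step I expect to be the main obstacle is the cofinal-sequence reduction in the middle paragraph: since $\SS$ is permitted to be uncountable, $\sigma$-additivity does not apply directly to the possibly uncountable intersection defining $B_\beta$, and it is only the nestedness hypothesis that rescues us by letting a countable decreasing sub-sequence stand in for the full collection of large members of $\SS$; once that reduction is in place, the remaining computations are routine level-set bookkeeping.
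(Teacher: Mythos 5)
Your proposal is correct in substance and follows essentially the same route as the paper's own proof: both identify the level sets of $Q_\S$ with intersections over the nested family (your $B_\beta$ is the paper's $S_\alpha$ up to replacing ``$\geq 1-\alpha$'' by ``$>\beta$'' and a final limiting step) and then read off their $\prob_U$-probability from nestedness. The cofinal-sequence reduction you flag as the main obstacle is precisely the gap in the paper's original argument---the claim that the uncountable intersection lies in $\SS$, or is even $\prob_U$-measurable, is not automatic---and it is exactly what the authors repair post-publication in Theorem~1$'$ of the appendix by requiring the members of $\SS$ to be closed and to include $\varnothing$ and $\UU$ (the latter also disposing of your normalization caveat, which is not truly without loss of generality as the theorem stands).
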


\begin{proof}[\indent Proof]
The idea of the proof is that $Q_\S(u)=\prob_\S\{\S \not\ni u\}$ is large iff $u$ sits outside a set that contains most realizations of $\S$.  To make this formal, take any $\alpha \in (0,1)$ and let $S_\alpha = \bigcap\{S \in \SS: \prob_U(S) \geq 1-\alpha\}$ be the smallest set in $\SS$ with $\prob_U$-probability no less than $1-\alpha$; here, the intersection over an empty collection of sets is taken to be $\UU$.  Since $\SS$ is nested, $S_\alpha \in \SS$, $\prob_U(S_\alpha) \geq 1-\alpha$, and 
\[ \prob_\S\{\S \subseteq S_\alpha\} = \sup_{S \in \SS: S \subseteq S_\alpha} \overline\prob_U(S) = \overline\prob_U(S_\alpha) \geq \prob_U(S_\alpha) \geq 1-\alpha. \]
Therefore, since $Q_\S(u) > 1-\alpha$ iff $u \not\in S_\alpha$, we get $\prob_U\{Q_\S(U) > 1-\alpha\} = \prob_U(S_\alpha^c) = 1-\prob_U(S_\alpha) \leq \alpha$.  Finally, validity follows since $\alpha$ was arbitrary.
\end{proof}

It is clear that, if $\prob_U$ is absolutely continuous and the nested support $\SS$ is sufficiently rich, then the predictive random set defined above is also efficient.  Specifically, if $\UU \in \SS$ and, for $S_\alpha$ defined in the proof above, $\prob_U(S_\alpha) = 1-\alpha$ for every $\alpha \in (0,1)$.  This vague argument for efficiency is made more precise in the next important special case.   

\begin{cor}
\label{thm:prs}
Suppose the $\prob_U$ is non-atomic, and let $h$ be a real-valued function on $\UU$.  Then the predictive random set $\S = \{u \in \UU: h(u) < h(U)\}$, with $U \sim \prob_U$, is valid.  If $h$ is continuous and constant only on $\prob_U$-null sets, then it is also efficient.    
\end{cor}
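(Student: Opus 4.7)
The plan is to reduce the statement to Theorem~\ref{thm:nested.prs} by recognizing $\S$ as being supported on the nested family of sublevel sets of $h$. Concretely, I would set $S_t = \{u \in \UU : h(u) < t\}$ for each $t \in \RR$ and take $\SS = \{S_t : t \in \RR\}$. Monotonicity of $t \mapsto S_t$ makes $\SS$ nested, and by construction the random set in question is exactly $S_{h(U)}$, a random element of $\SS$ indexed by the scalar random variable $V = h(U)$.

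The next step is to match the distribution of $\S = S_{h(U)}$ against the prescription $\prob_\S\{\S \subseteq K\} = \sup_{S \in \SS,\, S \subseteq K} \overline{\prob}_U(S)$ from Theorem~\ref{thm:nested.prs} (normalized, since $\UU = S_{+\infty} \in \SS$ gives $\sup_{S \in \SS} \prob_U(S) = 1$). By nestedness, for any measurable $K \subseteq \UU$ the set $\{t : S_t \subseteq K\}$ is a lower set, and writing $t_K^\ast = \sup\{t : S_t \subseteq K\}$, the supremum is attained by left-continuity of $t \mapsto S_t$. Hence $\{\S \subseteq K\} = \{h(U) \leq t_K^\ast\}$, so $\prob_\S\{\S \subseteq K\} = \prob_U\{h(U) \leq t_K^\ast\}$; the non-atomic hypothesis on $\prob_U$ lets me replace ``$\leq$'' by ``$<$'', yielding $\prob_U(S_{t_K^\ast})$, which is the supremum called for by Theorem~\ref{thm:nested.prs}. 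Validity of $\S$ then follows directly from that theorem.

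For efficiency, I would argue that continuity of $h$ together with $h$ being constant only on $\prob_U$-null sets makes $F(t) = \prob_U\{h(U) \leq t\}$ continuous on $\RR$: non-constancy on non-null sets forbids jumps, and non-atomicity of $\prob_U$ forbids accumulation at a single level. A direct calculation gives $Q_\S(u) = \prob_U\{h(u) \geq h(U)\} = F(h(u))$, so $Q_\S(U) = F(V)$, and the probability integral transform applied to a continuous $F$ yields $F(V) \sim \unif(0,1)$ exactly; this promotes the stochastic inequality of the validity definition to equality.

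The hard part is the atom-free identification in the second paragraph: verifying $\prob_U\{h(U) \leq t_K^\ast\} = \prob_U(S_{t_K^\ast})$ requires $\prob_U\{h(U) = t_K^\ast\} = 0$, i.e.\ that the pushforward $\prob_U h^{-1}$ on $\RR$ inherits the non-atomic property of $\prob_U$ at each relevant threshold. This is automatic once the continuity-and-almost-everywhere-non-constant hypotheses of the efficiency clause are in force, but it is the delicate point that has to be articulated carefully to make the application of Theorem~\ref{thm:nested.prs} airtight in the pure validity statement.
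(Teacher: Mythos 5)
Your overall route is the same as the paper's: validity by exhibiting $\S$ as supported on the nested family of sublevel sets of $h$ and invoking Theorem~\ref{thm:nested.prs}, efficiency by computing $Q_\S(u)=\prob_U\{h(U)\le h(u)\}=F(h(u))$ and applying the probability integral transform to the continuous distribution function $F$ of $h(U)$. The efficiency half is correct and essentially identical to the paper's argument.

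The ``delicate point'' you flag in the validity half is a genuine gap, and it cannot be patched under the stated hypotheses: non-atomicity of $\prob_U$ does not make the pushforward $\prob_U h^{-1}$ atomless, so the replacement of $\prob_U\{h(U)\le t_K^\ast\}$ by $\prob_U\{h(U)<t_K^\ast\}$ can fail, and the law of $\S=S_{h(U)}$ then does not match the prescription of Theorem~\ref{thm:nested.prs}. This is not merely a bookkeeping issue---validity itself fails. Take $\UU=[0,1]$, $\prob_U$ Lebesgue measure, and $h(u)=\max(0,\,u-\tfrac12)$ (continuous, but constant on $[0,\tfrac12]$). Then $\S=\varnothing$ whenever $U\le\tfrac12$, and $Q_\S(u)=\prob_U\{h(U)\le h(u)\}$ equals $\tfrac12$ for $u\le\tfrac12$ and equals $u$ for $u>\tfrac12$, so $\prob_U\{Q_\S(U)\ge\tfrac12\}=1>\tfrac12$, violating Definition~\ref{def:credible} at $\alpha=\tfrac12$. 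The missing hypothesis is exactly that the level sets of $h$ be $\prob_U$-null (equivalently, that $h(U)$ have a continuous distribution), which the efficiency clause happens to supply; alternatively one can use the closed sublevel sets $\{u:h(u)\le h(U)\}$, for which $Q_\S(u)=\prob_U\{h(U)<h(u)\}=F(h(u)-)$ and validity holds for arbitrary measurable $h$. For what it is worth, the paper's own proof of the validity clause is the single sentence that it ``is a consequence of Theorem~\ref{thm:nested.prs} and the fact that this $\S$ is nested,'' which silently skips the distributional matching you attempted; you have been more careful than the source, but as written your argument (like the paper's) establishes validity only when the efficiency-clause hypotheses are in force.
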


\begin{proof}[\indent Proof]
Validity is a consequence of Theorem~\ref{thm:nested.prs} and the fact that this $\S$ is nested.  To prove the efficiency claim, let $H$ be the distribution function of $h(U)$ when $U \sim \prob_U$.  Then, for $u \in \UU$, $Q_\S(u) = \prob_U\{h(U) \leq h(u)\} = H(h(u))$.  If $h$ satisfies the stated conditions, then $h(U)$ is a continuous random variable.  Therefore, if $U \sim \prob_U$, then $Q_\S(U) = H(h(U)) \sim \unif(0,1)$, so efficiency follows.  
\end{proof}

The above results demonstrate that nesting is a sufficient condition for predictive random set validity.  But nesting is not a necessary condition \citep{mzl2010}.  The real issue, however, is the performance of the corresponding IM.  We show in Section~\ref{S:optimality} that for any non-nested predictive random set $\S$, there is a nested predictive random set $\S'$ such that the IM based on $\S'$ is ``at least as good'' as that based on $\S$.

\subsection{IM validity}
\label{SS:validity}

Validity of the underlying predictive random set $\S$ is essentially all that is needed to prove the meaningfulness of the corresponding IM/belief function.  Here meaningfulness refers to a calibration property of the belief function.  

\begin{defn}
\label{def:validity}
Suppose $X \sim \prob_{X|\theta}$ and let $A$ be an assertion of interest.  Then the IM with belief function $\bel_x$ is \emph{valid for $A$} if, for each $\alpha \in (0,1)$, 
\begin{equation}
\label{eq:cred.lev}
\sup_{\theta \not\in A} \prob_{X|\theta}\bigl\{ \bel_X(A;\S) \geq 1-\alpha \bigr\} \leq \alpha.
\end{equation}
The IM is \emph{valid} if it is valid for all $A$.
\end{defn}

By \eqref{eq:uev}, the validity property can also be stated in terms of the plausibility function.  That is, the IM is valid if, for all assertions $A$ and for any $\alpha \in (0,1)$, 
\begin{equation}
\label{eq:cred.uev}
\sup_{\theta \in A} \prob_{X|\theta}\bigl\{ \pl_X(A;\S) \leq \alpha \bigr\} \leq \alpha.
\end{equation}

\begin{thm}
\label{thm:cred}
Suppose the predictive random set $\S$ is valid, and $\Theta_x(\S) \neq \varnothing$ with $\prob_{\S}$-probability 1 for all $x$.  Then the IM is valid.
\end{thm}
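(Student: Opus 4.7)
The plan is to exploit the link between the event that the belief function assigns high mass to a false assertion and the event that the predictive random set misses the unobserved auxiliary variable, and then invoke validity of $\S$.

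First I would use the hypothesis $\Theta_x(\S) \neq \varnothing$ a.s.\ to drop the conditioning in \eqref{eq:lev}, so that for all $x$,
\[
\bel_x(A;\S) = \prob_\S\{\Theta_x(\S) \subseteq A\}.
\]
Next, fix an assertion $A$ and a parameter $\theta \not\in A$, and write $X = a(U,\theta)$ with $U \sim \prob_U$, so that $U$ plays the role of the unobserved $u^\star$. The key observation is the pointwise containment
\[
\{\S \ni U\} \subseteq \{\theta \in \Theta_X(\S)\} \subseteq \{\Theta_X(\S) \not\subseteq A\},
\]
where the first inclusion holds because $\theta \in \Theta_X(U)$ by the A-step and $\Theta_X(U) \subseteq \Theta_X(\S)$ whenever $\S \ni U$, and the second inclusion holds because $\theta \not\in A$. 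Taking complements and evaluating $\prob_\S$-probabilities (with $X$ and hence $U$ temporarily treated as frozen), this yields
\[
\bel_X(A;\S) \leq \prob_\S\{\S \not\ni U\} = Q_\S(U).
\]

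With this pointwise bound in hand, the proof reduces to an application of the validity of $\S$. Specifically, since $U \sim \prob_U$ under $\prob_{X|\theta}$, for any $\alpha \in (0,1)$,
\[
\prob_{X|\theta}\bigl\{ \bel_X(A;\S) \geq 1-\alpha\bigr\} \leq \prob_U\bigl\{ Q_\S(U) \geq 1-\alpha\bigr\} \leq \alpha,
\]
by Definition~\ref{def:credible}. The bound is uniform in $\theta$ and in $A$, so taking the supremum over $\theta \not\in A$ verifies \eqref{eq:cred.lev} for every assertion $A$, establishing validity.

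The main conceptual step is the pointwise bound $\bel_X(A;\S) \leq Q_\S(U)$, and the only subtlety is the clean identification of the unobserved $u^\star$ with the sampling variate $U$ underlying $X = a(U,\theta)$; once one commits to the ``sampling'' viewpoint of the association, everything else is essentially bookkeeping, and the argument does not require $\Theta_x(u)$ to be a singleton (only that $\theta \in \Theta_x(u^\star)$). No further measurability issues arise beyond the standing assumption that $\bel_x(A;\S)$ is a measurable function of $x$.
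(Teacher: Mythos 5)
Your proof is correct and follows essentially the same route as the paper's: both reduce $\bel_X(A;\S)$ for $\theta\notin A$ to the miss probability $Q_\S(U)=\prob_\S\{\S\not\ni U\}$ and then invoke validity of $\S$ to get the uniform bound. Your chain of inclusions is just a slightly more explicit version of the paper's ``by monotonicity'' step (and correctly uses only $\theta\in\Theta_x(u^\star)$ rather than an equality, which the paper glosses over when $\Theta_x(u)$ is not a singleton).
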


\begin{proof}[\indent Proof]
For any $A$, take $(x,u,\theta)$ such that $\theta \not\in A$ and $x=a(\theta,u)$.  Since $A \subseteq \{\theta\}^c$, $\bel_x(A;\S) \leq \bel_x(\{\theta\}^c;\S) = \prob_\S\{\Theta_x(\S) \not\ni \theta\} = \prob_\S\{\S \not\ni u\}$ by monotonicity.  Validity of $\S$ implies that the right-hand side, as a function of $U \sim \prob_U$, is stochastically smaller than $\unif(0,1)$.  This, in turn, implies the same of $\bel_X(A;\S)$ as a function of $X \sim \prob_{X|\theta}$.  Therefore, $\prob_{X|\theta}\{\bel_X(A;\S) \geq 1-\alpha \} \leq \prob\{ \unif(0,1) \geq 1-\alpha \} = \alpha$.  Taking a supremum over $\theta \not\in A$ on the left-hand side completes the proof. 
\end{proof}

A key feature of the validity theorem above is that it holds under minimal conditions on the predictive random set.  Validity of the IM does not depend on the particular form of predictive random set, only that it is valid.  Recall that the condition ``$\Theta_x(\S) \neq \varnothing$ with $\prob_\S$-probability~1'' holds whenever \eqref{eq:constraint} holds.  See, also, \citet{leafliu2012}.

The following corollary states that the validity theorem remains true even after a suitable---possibly $\theta$-dependent---change of auxiliary variable.  In other words, the validity property is independent of the choice of auxiliary variable parametrization.  This reparametrization comes in handy in examples, including those in Section~\ref{S:more-examples}.  

\begin{cor}
\label{cor:repar}
Consider a one-to-one transformation $v = \phi_\theta(u)$ such that the push-forward measure $\prob_V = \prob_U \phi_\theta^{-1}$ on $\VV = \phi_\theta(\UU)$ does not depend on $\theta$.  Suppose $\S$ is valid for predicting $v^\star = \phi_\theta(u^\star)$, and $\Theta_x(\S) \neq \varnothing$ with $\prob_{\S}$-probability 1 for all $x$.  Then the corresponding belief function satisfies \eqref{eq:cred.lev} and the transformed IM is valid.  
\end{cor}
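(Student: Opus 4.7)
The plan is to reduce the corollary directly to Theorem~\ref{thm:cred} by showing that the reparametrization $v = \phi_\theta(u)$ gives rise to a bona fide IM on the new auxiliary space $\VV$. First I would verify that the pair $(\VV, \prob_V)$ together with the mapping $\tilde a(\theta, v) := a(\theta, \phi_\theta^{-1}(v))$ constitutes a legitimate association for the original sampling model $\prob_{X|\theta}$. Indeed, sampling $U \sim \prob_U$ and setting $X = a(\theta, U)$ yields the same distribution for $X$ as sampling $V \sim \prob_V$ and setting $X = \tilde a(\theta, V)$, because by hypothesis $\prob_V = \prob_U \phi_\theta^{-1}$ does not depend on $\theta$. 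This last point is essential: it says $V$ has a $\theta$-free a priori distribution, which is exactly what the IM framework of Section~\ref{SS:association} requires of an auxiliary variable.

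Next, I would construct the corresponding three-step IM on $\VV$: form the inverse sets $\tilde\Theta_x(v) = \{\theta \in \Theta : x = \tilde a(\theta, v)\}$, assemble the random set $\tilde\Theta_x(\S) = \bigcup_{v \in \S} \tilde\Theta_x(v)$, and define the belief function $\tilde\bel_x(A; \S) = \prob_\S\{\tilde\Theta_x(\S) \subseteq A\}$, where the conditioning in \eqref{eq:lev} is unnecessary because of the hypothesis $\tilde\Theta_x(\S) \neq \varnothing$ with $\prob_\S$-probability~1. Unobserved $v^\star := \phi_\theta(u^\star)$ is a draw from $\prob_V$ whenever $u^\star$ is a draw from $\prob_U$, so the unobserved target in the P-step of the reparametrized IM is $v^\star$, and the given $\S$ is valid for predicting it by assumption.

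At this point Theorem~\ref{thm:cred} applies verbatim to the reparametrized IM: its two hypotheses, validity of $\S$ (relative to $\prob_V$) and almost-sure non-emptiness of $\tilde\Theta_x(\S)$, are exactly those granted in the statement of the corollary. The theorem delivers the calibration bound $\sup_{\theta \notin A} \prob_{X|\theta}\{\tilde\bel_X(A; \S) \geq 1-\alpha\} \leq \alpha$ for every $\alpha \in (0,1)$ and every assertion $A$, which is \eqref{eq:cred.lev} for the transformed belief function, and hence the transformed IM is valid in the sense of Definition~\ref{def:validity}.

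The only subtlety, and what I would flag as the main point to get right, is that $\phi_\theta$ is allowed to depend on $\theta$, so one might worry that the ``auxiliary variable'' $V$ is not really a proper auxiliary variable in the IM sense. The resolution is that the IM construction only ever uses the distribution of the auxiliary variable, not any particular labeling of its realizations; since $\prob_V$ is $\theta$-free by hypothesis, $V$ qualifies as an auxiliary variable and the whole machinery of Section~\ref{S:im} and Theorem~\ref{thm:cred} applies without modification.
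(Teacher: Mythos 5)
Your proposal is correct and follows exactly the route the paper intends: the corollary is stated without proof precisely because, once one observes that the $\theta$-free distribution $\prob_V$ makes $V$ a legitimate auxiliary variable for the transformed association, Theorem~\ref{thm:cred} applies verbatim. Your explicit handling of the $\theta$-dependence of $\phi_\theta$ is the one point worth spelling out, and you resolve it correctly.
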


\subsection{Application: IM-based frequentist procedures}
\label{SS:freq}

In addition to providing problem-specific measures of certainty about various assertions of interest, the belief/plausibility functions can easily be used to create frequentist procedures.  First consider testing $H_0: \theta \in A$ versus $H_1: \theta \in A^c$.  Then an IM-based counterpart to a frequentist testing rule is of the following form: 
\begin{equation}
\label{eq:test}
\text{Reject $H_0$ if $\pl_x(A) \leq \alpha$, for a specified $\alpha \in (0,1)$}. 
\end{equation}
According to \eqref{eq:cred.uev} and Theorem~\ref{thm:cred}, if the predictive random set $\S$ is valid, then the probability of a Type~I error for such a rejection rule is $\sup_{\theta \in A} \prob_{X|\theta}\{\pl_X(A) \leq \alpha\} \leq \alpha$.  Therefore, the test \eqref{eq:test} controls the probability of a Type~I error at level $\alpha$.  

Next consider the class of singleton assertions $\{\theta\}$, with $\theta \in \Theta$.  As a counterpart to a frequentist confidence region, define the $100(1-\alpha)$\% \emph{plausibility region}
\begin{equation}
\label{eq:evid.int}
\Pi_x(\alpha) = \{\theta: \pl_x(\theta) > \alpha \}. 
\end{equation}
Now the coverage probability of the plausibility region \eqref{eq:evid.int} is 
\[ \prob_{X|\theta}\{\Pi_X(\alpha) \ni \theta\} = \prob_{X|\theta}\{\pl_X(\theta) > \alpha\} = 1-\prob_{X|\theta}\{\pl_X(\theta) \leq \alpha\} \geq 1-\alpha, \]
where the last inequality follows from Theorem~\ref{thm:cred}.  Therefore, this plausibility region has at least the nominal coverage probability.  

\begin{gex}[cont]
Suppose $X=5$.  Then, using the predictive random set $\S$ in \eqref{eq:default.prs}, the plausibility function is $\pl_5(\theta) = 1-|2\Phi(5-\theta)-1|$.  The 90\% plausibility interval for $\theta$, determined by the inequality $\pl_5(\theta) > 0.10$, is $5 \pm \Phi^{-1}(0.05)$, the same as the classical 90\% $z$-interval for $\theta$ given in standard textbooks.  
\end{gex}

\begin{pex}[cont]
For the predictive random set determined by $\S$ in \eqref{eq:default.prs}, the plausibility function $\pl_x(\theta)$ is displayed in \eqref{eq:pois-evid}.  For observed $X=5$, a 90\% plausibility interval for $\theta$, characterized by the inequality $\pl_5(\theta) > 0.10$, is $(1.97, 10.51)$. This interval is not the best possible; in fact, the one presented in Section~\ref{SSS:two.sided} is better.  But these plausibility intervals have exact coverage properties, which means that they may be too conservative at certain $\theta$ values for practical use.  This is the case for all exact intervals in discrete data problems \citep[e.g.,][]{bcd2003, cai2005}. 
\end{pex}


\section{Theoretical optimality of IMs}
\label{S:optimality}

\subsection{Intuition}
\label{SS:efficiency.intuition}

\citet{mzl2010} showed that Fisher's fiducial inference and Dempster--Shafer theory are special cases of the IM framework corresponding to a singleton predictive random set.  But it is easy to show that, for some assertions $A$, the fiducial probability is not valid in the sense of Definition~\ref{def:validity}.  To correct for this bias, we propose to replace the singleton with some larger $\S$.  But taking $\S$ to be too large will lead to inefficient inference.  So the goal is to take $\S$ just large enough that validity is achieved.

\subsection{Preliminaries}
\label{SS:prelim}

Throughout the subsequent discussion, we shall assume \eqref{eq:constraint}, i.e., $\Theta_x(u)\neq\varnothing$ for all $x$ and $u$.  This allows us to ignore conditioning in the definition of belief functions.  

For the predictive random set $\S_0 = \{U\}$, with $U \sim \unif(0,1)$, the belief function at $A$ is $\bel_x(A; \S_0) = \prob_U\{\Theta_x(U) \subseteq A\}$, where $\Theta_x(u) = \{\theta: x=a(\theta,u)\}$ as in \eqref{eq:inverse1}.  This is exactly the fiducial probability for $A$ given $X=x$.  For a general predictive random set $\S$, we have $\bel_x(A; \S) = \prob_\S\{\Theta_x(\S) \subseteq A\}$, where $\Theta_x(\S) = \bigcup_{u \in \S} \Theta_x(u)$ is defined in \eqref{eq:inverse2}.  In light of the discussion in Section~\ref{SS:efficiency.intuition}, we shall compare the two belief functions $\bel_x(A; \S)$ and $\bel_x(A; \S_0)$.  Towards this, we have the following result which says that the fiducial probability is an upper bound for the belief function.  

\begin{prop}
\label{prop:bel.fid.bound}
If \eqref{eq:constraint} holds and the predictive random set $\S$ is valid in the sense of Definition~\ref{def:credible}, then $\bel_x(A; \S) \leq \bel_x(A; \S_0)$ for each fixed $x$.  
\end{prop}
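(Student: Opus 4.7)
The plan is to rewrite both belief functions as probabilities of events defined by a single subset of $\UU$, and then to read off the desired inequality from the validity condition of Definition~\ref{def:credible}.

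First, I would introduce the auxiliary set
\[ B = B_x(A) = \bigl\{ u \in \UU : \Theta_x(u) \subseteq A \bigr\}. \]
Because \eqref{eq:constraint} holds, $\Theta_x(u)$ is never empty, so $B$ is well-behaved. Now $\bel_x(A;\S_0) = \prob_U\{\Theta_x(U)\subseteq A\} = \prob_U(B)$, and, since $\Theta_x(\S) = \bigcup_{u\in \S}\Theta_x(u)\subseteq A$ if and only if every $\Theta_x(u)$ with $u\in\S$ is contained in $A$, i.e.\ if and only if $\S \subseteq B$, we also obtain $\bel_x(A;\S) = \prob_\S\{\S \subseteq B\}$. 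Thus the claim reduces to the purely set-theoretic inequality
\[ \prob_\S\{\S \subseteq B\} \leq \prob_U(B). \]

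Second, I would set $c = \prob_\S\{\S \subseteq B\}$ and, for each $u \in B^c$, observe the pointwise inclusion of events $\{\S \subseteq B\} \subseteq \{\S \not\ni u\}$. Taking $\prob_\S$-probabilities gives $Q_\S(u) \geq c$ for every $u \in B^c$, so
\[ B^c \subseteq \bigl\{ u \in \UU : Q_\S(u) \geq c \bigr\}. \]

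Third, I would invoke the validity of $\S$. For $c \in (0,1)$, Definition~\ref{def:credible} applied with $\alpha = 1-c$ yields $\prob_U\{Q_\S(U) \geq c\} \leq 1-c$, and combining this with the previous inclusion gives $\prob_U(B^c) \leq 1 - c$, i.e.\ $\prob_U(B) \geq c$. The degenerate cases $c=0$ and $c=1$ are handled trivially (the former) and by letting $\alpha\downarrow 0$ in the validity inequality (the latter).

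The calculation itself is short; the only place one needs to be careful is measurability, namely that $B$ and $\{u:Q_\S(u)\geq c\}$ are $\prob_U$-measurable so the above probabilities are literally defined. This is the same kind of case-by-case measurability issue already flagged after the definition of $\bel_x(A)$ in Section~\ref{S:im}, and it is not a genuine analytic obstacle for the examples in the paper; if desired, one can bypass it cleanly by working with the outer measure of $B^c$. Aside from that bookkeeping, there is no real obstacle — the heart of the argument is simply the observation that $\{\S\subseteq B\}\subseteq \{\S\not\ni u\}$ for every $u \in B^c$, which converts validity of $\S$ directly into an upper bound for $\bel_x(A;\S)$ in terms of the fiducial probability $\bel_x(A;\S_0)$.
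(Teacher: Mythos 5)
Your proof is correct and is essentially the same argument as the paper's: you introduce the set $B$ (the paper's $\UU_x(A)$), observe that $Q_\S(u) \geq \prob_\S\{\S \subseteq B\}$ for every $u \in B^c$, and then apply the validity inequality to conclude $\prob_U(B) \geq \prob_\S\{\S \subseteq B\}$. The only additions are your explicit treatment of the degenerate cases $c\in\{0,1\}$ and the measurability remark, which the paper leaves implicit.
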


\begin{proof}[\indent Proof]
Let $\UU_x(A) = \{u: \Theta_x(u) \subseteq A\}$; note that $\S \subseteq \UU_x(A)$ iff $\Theta_x(\S) \subseteq A$.  Also, put $b = \bel_x(A;\S)$ and $b_0 = \bel_x(A; \S_0) \equiv \prob_U\{\UU_x(A)\}$.  If $u \not\in \UU_x(A)$, then 
\[ Q_\S(u) = \prob_\S\{\S \not\ni u\} \geq \prob_\S\{\S \subseteq \UU_x(A)\} = \prob_\S\{\Theta_x(\S) \subseteq A\} = b. \]
Therefore, $\prob_U\{Q_\S(U) \geq b\} \geq \prob_U\{\UU_x(A)^c\} = 1-b_0$.  Also, validity of $\S$ implies $\prob_U\{Q_\S(U) \geq b\} \leq 1-b$.  Consequently, $1-b_0 \leq 1-b$, i.e., $\bel_x(A; \S) \leq \bel_x(A; \S_0)$.  
\end{proof}

\ifthenelse{1=1}{}{
\begin{proof}[\indent Proof]
The proof is by contradiction.  First write $\bel_x(A; \S) = \prob_\S\{\S \subseteq \UU_x(A)\}$.  Suppose that $\delta = \prob_\S\{\S \subseteq \UU_x(A)\} - \prob_U\{\UU_x(A)\}$ is greater than zero.  Set $\alpha = 1-\prob_\S\{\S \subseteq \UU_x(A)\}$.  Then for any $u \not\in \UU_x(A)$, we have $\prob_\S\{\S \not\ni u\} > \prob_\S\{\S \subseteq \UU_x(A)\} = 1-\alpha$.  If, as in Section~\ref{SS:credible}, we write $Q_\S(u)$ for the left-hand side above, then it follows that 
\begin{align*}
\prob_U\{Q_\S(U) > 1-\alpha\} & \geq \prob_U\{\UU_x(A)^c\} = 1 - \prob_U\{\UU_x(A)\} \\
& = 1 - \bigl[ \prob_\S\{\S \subseteq \UU_x(A)\} - \delta \bigr] \\
& = \alpha + \delta.
\end{align*}
Since $\delta > 0$ by supposition, it follows that $\S$ is not valid---a contradiction!  Therefore, for each $x$, $\bel_x(A; \S)$ cannot exceed $\prob_U\{\UU_x(A)\}$, the fiducial probability for $A$.  
\end{proof}
}

For given assertion $A$ and predictive random set $\S$, consider the ratio
\begin{equation}
\label{eq:R.ratio}
R_A(x; \S) = \bel_x(A; \S) / \bel_x(A; \S_0), \quad x \in \XX. 
\end{equation}
We call this the relative efficiency of the IM based on $\S$ compared to fiducial.  Proposition~\ref{prop:bel.fid.bound} guarantees that this ratio is bounded by unity, provided that the denominator $\bel_x(A; \S_0)$ is non-zero.  Our main goal is to choose $\S$ to make this ratio large in some sense.  Towards this goal, we have the following ``complete-class theorem'' which says that nested predictive random sets---which, by Theorems~\ref{thm:nested.prs} and \ref{thm:cred}, produce valid IMs---are the only kind of predictive random sets under consideration.  

\begin{thm}
\label{thm:complete.class}
Fix $A \subseteq \Theta$ and assume \eqref{eq:constraint}.  Given any predictive random set $\S$, there exists a nested predictive random set $\S'$ such that $R_A(x;\S') \geq R_A(x; \S)$ for each $x$.  
\end{thm}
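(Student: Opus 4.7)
My plan is to construct $\S'$ directly from the ``contour function'' of $\S$, namely $Q_\S(u) = \prob_\S\{\S \not\ni u\}$, and then to exploit the intermediate inequality already uncovered in the proof of Proposition~\ref{prop:bel.fid.bound} to compare belief values pointwise in $x$.

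First I would set
\[
\S' = \{v \in \UU : Q_\S(v) < Q_\S(U)\}, \quad U \sim \prob_U.
\]
By Corollary~\ref{thm:prs} applied with $h = Q_\S$, the random set $\S'$ is supported on the nested family of sub-level sets $\{Q_\S < t\}$, so it is nested (and valid). Crucially, this construction depends on neither $x$ nor $A$, so the same $\S'$ can serve for every $x$ simultaneously.

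Next I would carry out the pointwise belief comparison. Fix $x$, write $b = \bel_x(A;\S)$, and recall $\UU_x(A) = \{u : \Theta_x(u) \subseteq A\}$ so that $\bel_x(A;\S) = \prob_\S\{\S \subseteq \UU_x(A)\}$. The calculation in the proof of Proposition~\ref{prop:bel.fid.bound} already shows that $Q_\S(u) \geq b$ for every $u \not\in \UU_x(A)$, so $m_x := \inf_{u \not\in \UU_x(A)} Q_\S(u) \geq b$ (with the convention $\inf\varnothing = +\infty$). Unwinding the definition of $\S'$, the event $\{\S' \subseteq \UU_x(A)\}$ is precisely $\{Q_\S(U) \leq m_x\}$, so
\[
\bel_x(A;\S') = \prob_U\{Q_\S(U) \leq m_x\} \geq \prob_U\{Q_\S(U) < b\}.
\]
Validity of $\S$ in the form $\prob_U\{Q_\S(U) \geq 1-\alpha\} \leq \alpha$, applied at $\alpha = 1-b$, gives $\prob_U\{Q_\S(U) < b\} \geq b$. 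Hence $\bel_x(A;\S') \geq b = \bel_x(A;\S)$, and dividing by the common, $\S$-independent denominator $\bel_x(A;\S_0)$ yields $R_A(x;\S') \geq R_A(x;\S)$ for every $x$.

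The main obstacle is really just bookkeeping: checking that $Q_\S$ is $\prob_U$-measurable so that $\S'$ qualifies as a predictive random set, handling the degenerate case $\UU_x(A)^c = \varnothing$, and tracking strict versus non-strict inequalities between $Q_\S(U)$ and the threshold $b$ carefully enough to apply the validity bound. The conceptual heart is the observation that $\UU_x(A)^c$ is contained in the super-level set $\{Q_\S \geq b\}$, so replacing $\S$ by the nested family of sub-level sets of $Q_\S$ can only increase---never decrease---the belief assigned to $A$, and it does so uniformly in $x$.
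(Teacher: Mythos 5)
Your proof is correct (modulo the measurability and edge-case bookkeeping you flag yourself), but it takes a genuinely different route from the paper's. The paper builds $\S'$ supported on the nested intersections $S_x = \bigcap_{x':\,\bel_{x'}(A;\S)\geq \bel_x(A;\S)} \UU_{x'}(A)$ of the a-events and endows it with the containment law $\prob_{\S'}\{\S'\subseteq K\}=\sup_{x: S_x\subseteq K}\bar b(x)$, where $\bar b$ is the belief function normalized by its supremum; the comparison $\bel_x(A;\S')\geq \prob_{\S'}\{\S'\subseteq S_x\}=\bar b(x)\geq \bel_x(A;\S)$ then holds essentially by construction and never invokes validity of $\S$. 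You instead take $\S'$ to be the nested family of sub-level sets of the contour function $Q_\S$ with the push-forward distribution, and pay for this with one appeal to the validity of $\S$ (to get $\prob_U\{Q_\S(U)<b\}\geq b$), reusing the inequality $Q_\S(u)\geq b$ for $u\notin\UU_x(A)$ from the proof of Proposition~\ref{prop:bel.fid.bound}. Two remarks on the trade-off. First, your argument requires $\S$ to be valid, which the theorem as stated does not literally assume; this is harmless in context (validity is the standing hypothesis of Proposition~\ref{prop:bel.fid.bound}, without it $R_A$ need not even be bounded by one, and the post-publication correction Theorem~3$'$ restates the result for valid $\S$ only), but you should say it explicitly. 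Second, your construction buys two things the paper's does not: $\S'$ is of the Corollary~\ref{thm:prs} form, hence a bona fide and (for non-atomic $\prob_U$) automatically valid predictive random set---precisely the defect of the original construction that Theorem~3$'$ was written to repair---and it depends on neither $A$ nor $x$, so the single $\S'$ dominates $\S$ simultaneously for every assertion, a strictly stronger conclusion than the fixed-$A$ statement being proved.
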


\begin{proof}[\indent Proof]
Given $\S$, construct a collection $\SS = \{S_x: x \in \XX\}$ as follows:
\[ S_x = \bigcap_{x': \bel_{x'}(A; \S) \geq \bel_x(A; \S)} \UU_{x'}(A),\]
where $\UU_x(A)$ is defined in the proof of Proposition~\ref{prop:bel.fid.bound}.  This collection $\SS$, which will serve as the support for the new $\S'$, is clearly nested.  Indeed, if $\bel_{x_1}(A; \S) \leq \bel_{x_2}(A; \S)$, then $S_{x_2} \subseteq S_{x_1}$.  The distribution $\prob_{\S'}$ of $\S'$ is defined as 
\[ \prob_{\S'}\{\S' \subseteq K\} = \sup_{x: S_x \subseteq K} \bar b(x), \quad K \subseteq \UU, \]
where $\bar b(t) = \bel_t(A;\S) / \sup_x \bel_x(A;\S)$ is the normalized belief function.  In particular, for $K=S_x$, we have $\prob_{\S'}\{\S' \subseteq S_x\} = \bar b(x) \geq \bel_x(A;\S)$.  Then we have
\begin{align*}
\bel_x(A; \S') & = \prob_{\S'}\{\Theta_x(\S') \subseteq A\} \\
& = \prob_{\S'}\{\S' \subseteq \UU_x(A)\} \\
& \geq \prob_{\S'}\{\S' \subseteq S_x\} \\
& \geq \bel_x(A;\S), 
\end{align*}
where the second equality is due to the fact that $\Theta_x(\S') \subseteq A$ iff $\S' \subseteq \UU_x(A)$, and the first inequality is by monotonicity of $\prob_{\S'}\{\S' \subseteq \cdot\}$ and the fact that $S_x \subseteq \UU_x(A)$ for each $x$.  Therefore, $R_A(x; \S')$ can be no less than $R_A(x; \S)$ for each $x$, proving the claim.
\end{proof}

We omit the details, but if the collection $\UU_x(A)$ in the proof of Proposition~\ref{prop:bel.fid.bound} is itself nested, then, in general, one can construct an ``optimal'' $\S$ such that $R_A(X;\S) \equiv 1$.  This is done explicitly for the special case in Section~\ref{SSS:one.sided} below.

\subsection{Optimality in special cases}
\label{SS:special.cases}

Throughout this section, we will focus on scalar $X$ and $\theta$.  However, this is just for simplicity, and not a limitation of the method; see Section~\ref{S:more-examples}.  Indeed, special dimension-reduction techniques, akin to Fisher's theory of sufficient statistics, are available to reduce the dimension of observed $X$ to that of $\theta$ within the IM framework; see \citet{imcond, immarg}.  Also, there is no conceptual difference between scalar and vector $\theta$ problems so, since the ideas are new, we prefer to keep the presentation as simple as possible. 

\subsubsection{One-sided assertions}
\label{SSS:one.sided}

Here we consider a one-sided assertion, e.g., $A = \{\theta \in \Theta: \theta < \theta_0\}$, where $\theta_0$ is fixed.  This ``left-sided'' assertion is the kind we shall focus on, but other one-sided assertions can be handled similarly.  In this context, we can consider a very strong definition of optimality.  

\begin{defn}
\label{def:optimal}
Fix a left-sided assertion $A$.  For two nested predictive random sets $\S$ and $\S'$, the IM based on $\S$ is said to be more efficient than that based on $\S'$ if, as functions of $X \sim \prob_{X|\theta}$ for any $\theta \in A$, $R_A(X;\S)$ is stochastically larger than $R_A(X;\S')$.  The IM based on $\S^\star$ is optimal, or most efficient, if $R_A(X; \S^\star)$ is stochastically largest, in the sense above, among all nested predictive random sets.  
\end{defn}

That optimality here is described via a stochastic ordering property is natural in light of the notion of validity used throughout.  This definition particularly strong because it concerns the full distribution of $R_A(X,\S)$ as a function of $X \sim \prob_{X|\theta}$, not just a functional thereof.  Next we establish a strong optimality result for one-sided assertions; when the assertion is not one-sided, it may not be possible to establish such a strong result.  

\begin{thm}
\label{thm:one.sided}
Let $A = \{\theta \in \Theta: \theta < \theta_0\}$ be a left-sided assertion.  Suppose that $\Theta_x(u)$, defined in \eqref{eq:inverse1}, is such that, for each $x$, the right endpoint $\sup \Theta_x(u)$ is a non-decreasing $($resp.~non-increasing$)$ function of $u$.  Then, for the given $A$, the optimal predictive random set is $\S^\star = [0,U]$ $($resp.~$\S^\star=[U,1]$$)$, where $U \sim \unif(0,1)$.  
\end{thm}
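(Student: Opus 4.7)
The plan is to exploit the one-sided structure of $A = \{\theta < \theta_0\}$, which collapses the containment $\Theta_x(\S) \subseteq A$ to a single scalar inequality involving $g_x(u) := \sup \Theta_x(u)$, and then to show that $\S^\star = [0,U]$ already attains the fiducial upper bound supplied by Proposition~\ref{prop:bel.fid.bound}. I will treat only the non-decreasing case; the non-increasing case is symmetric, with $\S^\star = [U,1]$ playing the analogous role, because then $\sup_{u \in [U,1]} g_x(u) = g_x(U)$.

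As a preliminary, I would verify that $\S^\star = [0,U]$ is legitimate. Its support $\{[0,u] : u \in [0,1]\}$ is totally ordered by inclusion, hence nested. A direct computation gives
\[ Q_{\S^\star}(u) = \prob_U\{[0,U] \not\ni u\} = \prob_U\{U < u\} = u, \]
so $Q_{\S^\star}(U) \sim \unif(0,1)$, confirming both validity (Definition~\ref{def:credible}) and efficiency. In particular, Theorem~\ref{thm:cred} and Proposition~\ref{prop:bel.fid.bound} apply to $\S^\star$.

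The heart of the argument is the identity $\bel_x(A;\S^\star) = \bel_x(A;\S_0)$ for every $x$. Since $\Theta_x(\S^\star) \subseteq A$ iff $\sup_{u \in [0,U]} g_x(u) < \theta_0$, and since $g_x$ is non-decreasing with $U$ itself in the supremum range, the supremum equals $g_x(U)$, giving
\[ \{\Theta_x(\S^\star) \subseteq A\} \;=\; \{g_x(U) < \theta_0\} \;=\; \{\Theta_x(U) \subseteq A\}. \]
Taking $\prob_U$-probabilities of the extreme events yields $\bel_x(A;\S^\star) = \bel_x(A;\S_0)$. Combining this equality with Proposition~\ref{prop:bel.fid.bound} applied to any valid $\S$ (in particular, by Theorem~\ref{thm:complete.class}, we may restrict to nested $\S$) gives
\[ \bel_x(A;\S) \;\leq\; \bel_x(A;\S_0) \;=\; \bel_x(A;\S^\star), \]
so that $R_A(x;\S) \leq R_A(x;\S^\star) = 1$ \emph{pointwise} in $x$ on the set where the fiducial denominator is positive. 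Pointwise domination trivially implies the stochastic ordering demanded by Definition~\ref{def:optimal}, completing the proof.

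The only delicate point I anticipate is the handling of $\sup_{u \leq U} g_x(u)$ when $g_x$ has upward jumps: a priori this supremum could exceed $g_x(U)$. The resolution is already built into the argument above---because the closed interval $[0,U]$ includes the right endpoint, monotonicity forces $g_x(u) \leq g_x(U)$ for every $u \leq U$, and hence $\sup_{u \leq U} g_x(u) = g_x(U)$ without any continuity hypothesis on $g_x$. Thus the clean equivalence of events holds in full generality, and no extra regularity on the association is needed.
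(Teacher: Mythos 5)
Your proposal is correct and follows essentially the same route as the paper: establish that monotonicity of $\sup\Theta_x(u)$ gives $\sup\Theta_x([0,U]) = \sup\Theta_x(U)$, hence $\bel_x(A;\S^\star) = \bel_x(A;\S_0)$ and $R_A(\cdot;\S^\star)\equiv 1$, which by Proposition~\ref{prop:bel.fid.bound} is the pointwise upper bound and therefore stochastically largest. Your added checks (nestedness/validity of $[0,U]$ and the remark that no continuity of $g_x$ is needed because the interval is closed at $U$) are correct details the paper leaves implicit.
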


\begin{proof}[\indent Proof]
First observe that both forms of $\S^\star$ are nested.  We shall focus on the non-decreasing case only; the other case is similar.  Since $\sup\Theta_x(u)$ is non-decreasing in $u$, it follows that $\sup \Theta_x([0,U]) = \sup \Theta_x(U)$.  Therefore, 
\[ \bel_x(A; \S^\star) = \prob_U\{\sup \Theta_x([0,U]) < \theta_0\} = \prob_U\{\sup \Theta_x(U) < \theta_0\} = \bel_x(A; \S_0). \]
This holds for all $x$, so $R_A(\cdot; \S^\star) \equiv 1$, its upper bound.  Consequently, $R_A(X; \S^\star)$ is stochastically larger than $R_A(X;\S)$ for any other $\S$, so optimality of $\S^\star$ obtains.  
\end{proof}

\begin{gex}[cont]
We showed previously that $\Theta_x(u) = \{x-\Phi^{-1}(u)\}$.  If we treat this as a degenerate interval, then we see that the right endpoint $x-\Phi^{-1}(u)$ is a strictly decreasing function of $u$.  Therefore, by Theorem~\ref{thm:one.sided}, the optimal predictive random set for a left-sided assertion is $\S^\star=[U,1]$, $U \sim \unif(0,1)$.  

As an application, consider the testing problem $H_0: \theta \geq \theta_0$ versus $H_1: \theta < \theta_0$.  If we take $A = (-\infty,\theta_0)$, then the IM-based rule \eqref{eq:test} rejects $H_0$ iff $1-\bel_x(A; \S^\star) \leq \alpha$.  With the optimal $\S^\star=[U,1]$ as above, we get $\bel_x(A; \S^\star) = \Phi(\theta_0-x)$.  So the IM-based testing rule rejects $H_0$ iff $\Phi(\theta_0-x) \geq 1-\alpha$ or, equivalently, iff $x \leq \theta_0 - \Phi^{-1}(1-\alpha)$.  The reader will recognize this as the uniformly most powerful size-$\alpha$ test based on the classical Neyman--Pearson theory.  
\end{gex}

\begin{pex}[cont]
In this case, $\Theta_x(u) = (G_x^{-1}(u), G_{x+1}^{-1}(u)]$; see \eqref{eq:pois-aeqn-t-mapping}.  The right endpoint $G_{x+1}^{-1}(u)$ is strictly increasing in $u$.  So Theorem~\ref{thm:one.sided} states that, for left-sided assertions, the optimal predictive random set is $\S^\star=[0,U]$, $U \sim \unif(0,1)$.  The same connection with the Neyman--Pearson uniformly most powerful test in the Gaussian example holds here as well, but we omit the details.  
\end{pex}

\subsubsection{Two-sided assertions}
\label{SSS:two.sided}

Consider the case where $A = \{\theta_0\}^c$ is the two-sided assertion of interest, with $\theta_0$ a fixed interior point of $\Theta \subseteq \RR$.  This is an important case, which we have already considered in Section~\ref{S:im}, just in a different form.  These problems are apparently more difficult than their one-sided counterparts, just like in the classical hypothesis testing context.  Here we present some basic results and intuitions on local IM optimality for two-sided assertions.  

Assume $\prob_{X|\theta}$ is continuous.  Then the fiducial probability $\bel_X(\{\theta_0\}^c;\S_0)$ for the two-sided assertion is unity, and so the relative efficiency \eqref{eq:R.ratio} is simply $\bel_x(\{\theta_0\}^c;\S)$.  Here we focus on predictive random sets $\S$ with the property that $\bel_X(\{\theta_0\}^c;\S) \sim \unif(0,1)$ under $\prob_{X|\theta_0}$; see Corollary~\ref{thm:prs}.   Based on the intuition developed in Section~\ref{S:im}, $\bel_X(\{\theta_0\}^c;\S)$ should be smallest (probabilistically) under $\prob_{X|\theta}$ for $\theta=\theta_0$.  We shall, therefore, impose the following condition on the predictive random set $\S$:
\begin{equation}
\label{eq:ordering}
\prob_{X|\theta} \{ \bel_X(\{\theta_0\}^c; \S) \leq \alpha\} < \alpha, \quad \forall\;\theta\neq\theta_0, \quad \forall \; \alpha \in (0,1).  
\end{equation}
Roughly speaking, condition \eqref{eq:ordering} states that the belief function at $\{\theta_0\}^c$ is stochastically larger under $\prob_{X|\theta}$ than under $\prob_{X|\theta_0}$.  There is also a loose connection between \eqref{eq:ordering} and the classical unbiasedness condition imposed to construct optimal tests when the alternative hypothesis is two-sided \citep[][Ch.~4]{lehmann.romano.2005}.  Our goal in what follows is to find a ``best'' predictive random set that satisfies \eqref{eq:ordering}.

To make things formal, suppose that both $\XX$ and $\Theta$ are one-dimensional, that $\prob_{X|\theta}$ is continuous with distribution function $F_\theta(x)$ and density function $f_\theta(x)$, and that the usual regularity conditions hold; in particular, we assume that the order of expectation with respect to $\prob_{X|\theta}$ and differentiation with respect to $\theta$ can be interchanged.  Note that we have fixed a parametrization, and the analysis that follows depends on this selection.  Let $T_\theta(x) = (\del/\del\theta) \log f_\theta(x)$ be the score function, an important quantity in what follows.  Also, let $V_\theta(x) = T_\theta(x)^2 + (\del/\del\theta)T_\theta(x)$.  Then, under the usual regularity conditions, we have $\E_{X|\theta}\{T_\theta(X)\} = 0$ and $\E_{X|\theta}\{V_\theta(X)\} = 0$ for all $\theta$.    

In Appendix~\ref{S:details} we argue that a good predictive random set $\S$ must have a support with certain symmetry or balance properties with respect to the sampling distribution of $T_{\theta_0}(X)$.  In particular, let $B=\{B_t: t \in \TT\}$ be a generic collection of nested measurable subsets of $\TT = T_{\theta_0}(\XX)$.  The collection $B$ shall be called \emph{score-balanced} if 
\begin{equation}
\label{eq:balance1a}
\E_{X|\theta_0}\{T_{\theta_0}(X) I_{B_t}(T_{\theta_0}(X))\} = 0, \quad \forall\; t \in \TT. 
\end{equation} 

For a score-balanced collection $B=\{B_t\}$ satisfying \eqref{eq:balance1a} we can define a corresponding \emph{score-balanced predictive random set} $\S=\S_B$ as follows.  Define the class $\SS = \{S_t: t \in \TT \}$ of subsets of $\UU = [0,1]$ given by 
\[ S_t = F_{\theta_0}\bigl(\{x: T_{\theta_0}(x) \in B_t\}). \]
For simplicity, and without loss of generality, assume $\SS$ contains $\varnothing$ and $\UU$.  Now take a predictive random set $\S_B$, supported on $\SS$, such that its measure $\prob_{\S_B}$ satisfies 
\[ \prob_{\S_B}\{\S_B \subseteq K\} = \sup_{t: S_t \subseteq K} \prob_U(S_t), \quad K \subseteq [0,1], \]
where $\prob_U$ is the $\unif(0,1)$ measure.  (The set $S_t$ is $\prob_U$-measurable for all $t$ by the assumed measurability of $B_t$, $T_{\theta_0}$, and $F_{\theta_0}$.)  The corresponding score-balanced belief function is 
\begin{align*}
\bel_x(\{\theta_0\}^c; \S_B) & = \prob_{\S_B}\{\S_B \not\ni F_{\theta_0}(x)\} \\
& = \prob_{X|\theta_0}\{B_{T_{\theta_0}(X)} \not\ni T_{\theta_0}(x)\} \\
& = \prob_{X|\theta_0}\{T_{\theta_0}(X) \in B_{T_{\theta_0}(x)}\}, 
\end{align*}
where the last equality follows from the assumed nesting of $\{B_t\}$.  Proposition~\ref{prop:balance1} in Appendix~\ref{S:details} shows that predictive random sets which are good in the sense that \eqref{eq:ordering} holds (at least locally) must be score-balanced. 

But there are many such $\S_B$ to choose from, so we now consider finding a ``best'' one.  A reasonable definition of optimal score-balanced predictive random set is one that makes the difference between the right- and left-hand sides of \eqref{eq:ordering} as large as possible for each $\theta$ in a neighborhood of $\theta_0$.  Then, for two-sided assertions, we have     

\begin{defn}
\label{def:two.sided}
Let $B^\star=\{B_t^\star: t \in \TT\}$ be such that, for each $t$, 
\begin{equation}
\label{eq:ts.optimal}
\int_{T_{\theta_0}(x) \in B_t^\star} V_{\theta_0}(x) f_{\theta_0}(x) \,dx 
\end{equation}
is minimized subject to the score-balance constraint \eqref{eq:balance1a}.  Then $\S^\star=\S_{B^\star}$ is the optimal score-balanced predictive random set.  
\end{defn}

Here we give a general construction of an an optimal score-balanced predictive random sets.  Proving that the predictive random sets satisfy the conditions of Definition~\ref{def:two.sided} will require assumptions about the model.  Start with the following class of intervals:
\begin{equation}
\label{eq:B.interval}
B_t^\star = \bigl(\xi_-(t), \xi_+(t)\bigr), \quad t \in T_{\theta_0}(\XX), 
\end{equation}
where the functions $\xi_-,\,\xi_+$ (which depend implicitly on $\theta_0$) are such that \eqref{eq:balance1a} holds.  In addition, we shall assume these functions are continuous and satisfy 
\begin{itemize}
\item $\xi_-(t)$ is non-positive, $\xi_-(t) = t$ for $t \in (-\infty,0)$ and is decreasing for $t \in [0,\infty)$;
\vspace{-2mm}
\item $\xi_+(t)$ is non-negative, $\xi_+(t) = t$ for $t \in [0,\infty)$ and is increasing for $t \in (-\infty, 0)$.
\end{itemize}
The functions $\xi_-,\,\xi_+$ describe a sort of symmetry/balance in the distribution of $T_{\theta_0}(X)$: they satisfy $\xi_+(\xi_-(t)) = t$ and $\xi_-(\xi_+(-t)) = -t$ for all $t \geq 0$.  In some cases, for given $t$, expressions for $\xi_-(t)$ and $\xi_+(t)$ can be found analytically, but typically numerical solutions are required.  Set $\S^\star = \S_{B^\star}$.  We claim that, under certain conditions on $V_{\theta_0}(x)$, $\S^\star$ is optimal in the sense of Definition~\ref{def:two.sided}.  

Before we get to the optimality considerations, we first verify the assumption that $\bel_X(\{\theta_0\}^c; \S^\star) \sim \unif(0,1)$ under $\prob_{X|\theta_0}$.  From the definition of $B_t^\star$, it is clear that 
\begin{align*}
T \in B_t^\star & \iff \xi_-(t) < T < \xi_+(t) \\
& \iff \xi_-(t) < \xi_-(T) < \xi_+(T) < \xi_+(t) \\
& \iff \xi_+(T) - \xi_-(T) < \xi_+(t) - \xi_-(t).
\end{align*}
Consequently, if $D_{\theta_0}(X) = \xi_+(T_{\theta_0}(X)) - \xi_-(T_{\theta_0}(X))$, then 
\[ \bel_x(\{\theta_0\}^c; \S^\star) = \prob_{X|\theta_0}\{T_{\theta_0}(X) \in B_{T_{\theta_0}}^\star(x)\} = \prob_{X|\theta_0}\{D_{\theta_0}(X) < D_{\theta_0}(x)\}. \]
Therefore, since $D_{\theta_0}(X)$ is a continuous random variable, an argument like that in Corollary~\ref{thm:prs} shows that $\bel_X(\{\theta_0\}^c;\S^\star) \sim \unif(0,1)$ under $\prob_{X|\theta_0}$.  

We are now ready for optimality of $\S^\star$.  Write $V(t)$ for $V_{\theta_0}(x)$, when treated as a function of $t=T_{\theta_0}(x)$.  The condition to be imposed is:
\begin{equation}
\label{eq:unimodal}
\text{$V(t)$ is uniquely minimized at $t=0$, and $V(0) < 0$.} 
\end{equation}
This condition holds, e.g., for all exponential families with $\theta$ the natural parameter.  

\begin{prop}
\label{prop:two.sided}
Under condition \eqref{eq:unimodal}, the score balanced predictive random set $\S^\star=\S_{B^\star}$, with $B^\star$ described above, is optimal in the sense of Definition~\ref{def:two.sided}.  
\end{prop}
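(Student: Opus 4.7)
\textbf{Proof plan for Proposition~\ref{prop:two.sided}.}

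The plan is to recast Definition~\ref{def:two.sided} as a classical constrained optimization problem and solve it via a generalized Neyman--Pearson argument. Fix $t \in \TT$ and write the optimization entirely on the score scale by pushing forward via $T_{\theta_0}$: let $g$ denote the density of $T_{\theta_0}(X)$ under $\prob_{X|\theta_0}$ and let $V(s)$ denote $V_{\theta_0}(x)$ expressed as a function of $s = T_{\theta_0}(x)$ (using a conditional expectation if $T_{\theta_0}$ is not one-to-one). The quantity in \eqref{eq:ts.optimal} then becomes $\int_{B} V(s) g(s)\,ds$, the balance constraint \eqref{eq:balance1a} becomes $\int_B s\, g(s)\, ds = 0$, and we also impose the implicit size constraint $\int_B g(s)\,ds = G(B_t^\star)$---without such a constraint the trivial $B=\varnothing$ makes \eqref{eq:ts.optimal} zero and the problem degenerate. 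The motivation for the size constraint is that Definition~\ref{def:two.sided} is only informative across families whose belief functions have the same $\prob_{X|\theta_0}$-law (uniform on $(0,1)$); fixing $G(B_t) = G(B_t^\star)$ enforces this common law.

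With these two linear equality constraints, the generalized Neyman--Pearson lemma gives that the minimizer has the form
\[
B^{\circ} = \bigl\{ s \in \TT : V(s) < \lambda_0 + \lambda_1 s \bigr\}
\]
for Lagrange multipliers $\lambda_0, \lambda_1$ determined by the two constraints. Condition \eqref{eq:unimodal}, together with regularity of $V$ (which in the motivating exponential-family case yields $V(s) = s^2 - I(\theta_0)$, strictly convex), makes $V(s) - \lambda_1 s$ also strictly U-shaped with a single minimum, so that $B^{\circ}$ is an open interval, say $(\xi_-,\xi_+)$. The balance constraint $\int_{\xi_-}^{\xi_+} s\, g(s)\, ds = 0$ pins down one equation between $\xi_-$ and $\xi_+$, while the size constraint pins down the other, so $(\xi_-,\xi_+)$ is uniquely determined by $t$ via $G((\xi_-,\xi_+)) = G(B_t^\star)$. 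An elementary check, using the endpoint rules $\xi_-(t)=t$ for $t<0$ and $\xi_+(t)=t$ for $t\geq 0$ and the relations $\xi_+\circ\xi_-(t)=t$, verifies that the family $B_t^\star$ from \eqref{eq:B.interval} satisfies both the balance constraint and the size-matching constraint, so it coincides with $B^{\circ}$ at each $t$. Nestedness of $\{B_t^\star\}$ is then immediate from the monotonicity assumptions on $\xi_-$ and $\xi_+$, and minimality of \eqref{eq:ts.optimal} at each $t$ gives optimality of $\S^\star = \S_{B^\star}$ in the sense of Definition~\ref{def:two.sided}.

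The main obstacle is Step~2: guaranteeing that the sublevel set $\{V(s) - \lambda_1 s < \lambda_0\}$ is genuinely an interval containing $0$, with the prescribed symmetry pattern, under \eqref{eq:unimodal} alone. Strict unimodality of $V$ with $V(0) < 0$ is enough when $\lambda_1 = 0$ (the symmetric case), but a general nonzero $\lambda_1$ requires that the tilt $V(s) - \lambda_1 s$ retains a unique minimum---this is automatic for strictly convex $V$ (as in all natural-parameter exponential families, where the condition was advertised) but would need a separate hypothesis beyond bare unimodality in other cases. Once this shape hypothesis is secured, the remaining verifications---matching $(\xi_-,\xi_+)$ to the description below \eqref{eq:B.interval}, checking nesting and measurability of the induced $\S_B$, and cleaning up the boundary case $t=0$ where $B_0^\star = \varnothing$---are routine.
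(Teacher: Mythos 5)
Your plan is essentially the paper's own argument, made precise: the paper only sketches the proof, appealing to exactly the exchange idea you formalize --- the intervals $B_t^\star$ are ``exactly the right size'' to make $\S_{B^\star}$ efficient (your size constraint, coming from the requirement $\bel_X(\{\theta_0\}^c;\S_B)\sim\unif(0,1)$), any competitor must be score-balanced and of the same size, and hence must trade mass near $T_{\theta_0}(x)=0$, where $V$ is smallest, for mass where $V$ is larger, increasing \eqref{eq:ts.optimal}. Casting this as a generalized Neyman--Pearson problem with the two linear constraints is a clean way to carry out what the paper calls the ``simple but tedious'' part. The obstacle you flag is real and is also glossed over in the paper's sketch: for the minimizer $\{s: V(s)<\lambda_0+\lambda_1 s\}$ to coincide with the interval $(\xi_-(t),\xi_+(t))$ one needs a supporting line through the two endpoint values of $V$ that dominates $V$ inside and is dominated outside, which bare unimodality \eqref{eq:unimodal} does not guarantee but convexity of $V$ does; the paper's intended scope (natural exponential families, where $V(s)=s^2-I(\theta_0)$, and the subsequent discussion which explicitly assumes $V$ convex) makes clear that convexity is the operative hypothesis, so your added assumption is consistent with, rather than a departure from, the paper's argument.
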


\begin{proof}[\indent Proof]
The proof is simple but tedious so here we just sketch the main idea.  Under \eqref{eq:unimodal}, the intervals $B_t^\star$ which are ``balanced'' around $T_{\theta_0}(x) = 0$, make most efficient use of the space where $V_{\theta_0}(x)$ is smallest in the following sense.  They are exactly the right size to make $\S_{B^\star}$ efficient, so any other efficient score-balanced predictive random set $\S_B$ must be determined by sets $B=\{B_t\}$ other than intervals concentrated around $T_{\theta_0}(x)=0$.  Since such intervals are where $V_{\theta_0}(x)$ is smallest, the integral in \eqref{eq:ts.optimal} corresponding to $B_t$ must be larger than that corresponding to $B_t^\star$.  Therefore, $\S^\star$ satisfies the conditions of Definition~\ref{def:two.sided} and, hence, is optimal. \end{proof} 

Unfortunately, \eqref{eq:unimodal} is not always satisfied.  For example, it can fail for exponential families not in natural form.  But we claim that \eqref{eq:unimodal} is not absolutely essential.  Assume $V(t)$ is convex and $V(0) < 0$.  This relaxed assumption holds, e.g., for all exponential families.  To keep things simple, suppose that $V(t)$ is minimized at $\hat t > 0$.  Although the argument to be given is general, Figure~\ref{fig:exp}(a) illustrates the phenomenon for the exponential distribution with mean $\theta_0=1$.  The heavy line there represents $V(t)$, and the thin lines represent $th(t)$ (black) and $V(t)h(t)$ (gray), where $h(t)$ is the density of $T$.  The horizontal lines represent the intervals $B_t^\star$ in \eqref{eq:B.interval} for select $t$.  By convexity of $V(t)$, there exists $t_0$ such that $\hat t \in (0,t_0)$ and $V(t) < V(0)$ for each $t \in (0,t_0)$; this is $(0,0.5)$ in the figure.  For $t \in (0,t_0)$, the intervals $B_t^\star$ do not contain $(0,t_0)$; these intervals are shown in black.  In such cases, the integral \eqref{eq:ts.optimal} can be reduced by breaking $B_t^\star$ into two parts: one part takes more of $(0,t_0)$, where $V(t)$ is smallest, and the other part is chosen to satisfy the score-balance condition \eqref{eq:balance1a}.  But when $t \geq t_0$, no improvement can be made by changing $B_t^\star$; these cases are shown in gray.  So, in this sense, the intervals $B_t^\star$ in \eqref{eq:B.interval} are not too bad even if \eqref{eq:unimodal} fails.  

On the other hand, violations of \eqref{eq:unimodal} are due to the choice of the parametrization.  Indeed, under mild assumptions, there exists a transformation $\eta=\eta(\theta)$ such that the corresponding $V(t)$ function for $\eta$ satisfies \eqref{eq:unimodal}. Then the predictive random set $\S^\star$ in Proposition~\ref{prop:two.sided} is the optimal for this transformed problem.

\begin{gex}[cont]
This is a natural exponential family distribution, so Proposition~\ref{prop:two.sided} holds, and $\S^\star$ is the optimal score-balanced predictive random set.  Here the score function is $T_\theta(x) = x-\theta$.  Under $X \sim \nm(\theta,1)$, the distribution of $T_\theta(X)$ is symmetric about 0.  Therefore, $B_t^\star = (-|t|,|t|)$, and the corresponding predictive random set is supported on subsets $S_t$ given by 
\[ S_t = F_{\theta_0}\bigl(\{x: |x-\theta_0| \leq |t|\}\bigr) = \bigl( \Phi(-|t|), \Phi(|t|) \bigr), \]
with belief function $\bel_x(\{\theta_0\}^c; \S^\star) = 2\Phi(|x-\theta_0|)-1$.  This is exactly one minus the plausibility function in \eqref{eq:gauss.pl} based on the default predictive random set \eqref{eq:default.prs}.  Therefore, we conclude that the \eqref{eq:default.prs} is, in fact, the optimal score-balanced predictive random set in the Gaussian problem.  This is consistent with our intuition, given that the results based on this default choice in the Gaussian example match up with good classical results.  
\end{gex}

\begin{eex}
Suppose $X$ is an exponential random variable with mean $\theta$, as discussed above.  Unlike the Gaussian, this distribution is asymmetric, so, for the optimal score-balanced IM, a numerical method is needed to identify the set $B_{T_{\theta_0}(x)}$ for each observed $x$.  Plots of the corresponding plausibility functions $\pl_x(\theta;\S) = 1-\bel_x(\{\theta\}^c; \S)$ for two different predictive random sets based on $X=5$ are shown in Figure~\ref{fig:exp}(b).  The black line is based on the optimal score-balanced predictive random set, and the gray line is based on the default predictive random set in \eqref{eq:default.prs}.  90\% plausibility intervals, determined by the horizontal line at $\alpha=0.1$, are much shorter for the score-balanced IM compared to the default in this case.  For comparison, one might consider a crude nominal 90\% confidence interval for $\theta$, namely, $(Xe^{-1.65}, Xe^{1.65})$, based on a variance-stabilizing transformation and normal approximation.  These intervals tend to be shorter than both plausibility intervals, but their coverage probability ($\approx 0.82$) is too small.  
\end{eex}

\begin{figure}
\begin{center}
\subfigure[Plot of $V(t)$ vs.~$t$.]{\scalebox{0.6}{\includegraphics{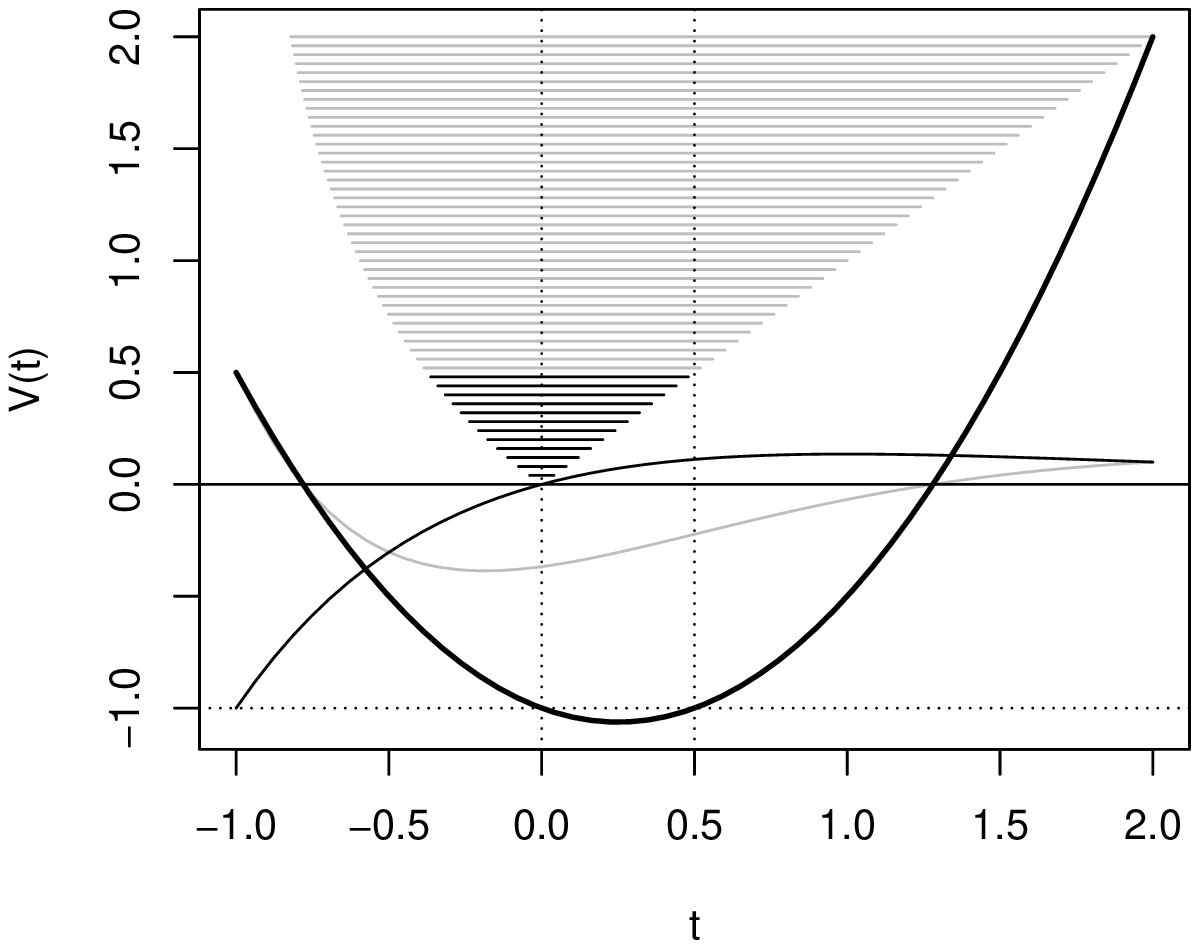}}}
\subfigure[Plausibility functions]{\scalebox{0.6}{\includegraphics{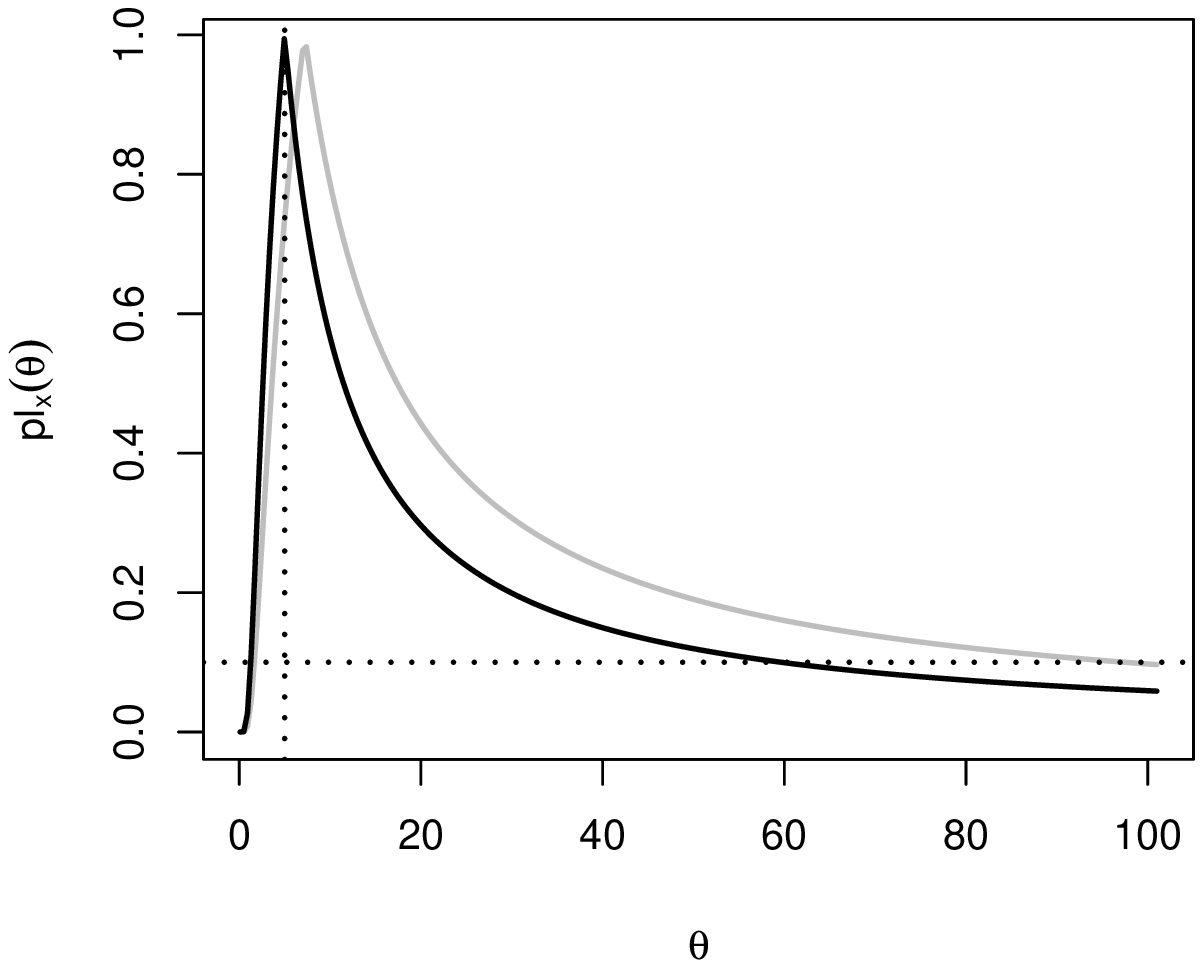}}}
\caption{Specifics of Panel (a) are discussed in the text.  Panel (b) shows $\pl_x(\theta; \S)$, as a function of the exponential scale parameter $\theta$, for two predictive random sets $\S$: optimal score-balanced (black) and default (gray).  Vertical line marks the observed $X=5$.}
\label{fig:exp}
\end{center}
\end{figure} 

\begin{pex}[cont]
Although the theory above holds only for continuous models, the score-balanced predictive random set performs well in discrete problems too.  For the sake of space, we refer the reader to \citet{impois} for the details.  
\end{pex}

\ifthenelse{1=1}{}{
here we focus on the results, saving the computational details to be reported elsewhere \citep{impois}.  Suffice it to say that, evaluation of the plausibility function of the optimal score-balanced IM requires a special iterative sorting of the positive integers, much like what is needed to compute the Poisson mode.  Here the ``optimal'' adjective simply indicates that we are using $\S^\star$ .  Figure~\ref{fig:opt.pois} shows plots of the two plausibility functions $\pl_x(\theta;\S)$---based on score-balanced and default predictive random sets, respectively---for several $x$ values over a range of $\theta$.  Although the shapes of the curves can vary depending on $x$, one general observation is that the score-balanced plausibility function is smaller than the default plausibility function for most $\theta$.  This implies that plausibility intervals based on the former will generally be narrower than those based on the latter.  
\end{pex}

\begin{figure}[t]
\begin{center}
\subfigure[$x=1$]{\scalebox{0.6}{\includegraphics{pois_pl_opt_x1}}}
\subfigure[$x=2$]{\scalebox{0.6}{\includegraphics{pois_pl_opt_x2}}}
\subfigure[$x=5$]{\scalebox{0.6}{\includegraphics{pois_pl_opt_x5}}}
\subfigure[$x=10$]{\scalebox{0.6}{\includegraphics{pois_pl_opt_x10}}}
\caption{Plots of $\pl_x(\theta; \S)$ in the Poisson example for two different $\S$ and several values of $x$: optimal score-balanced optimal (black), and default (gray).}
\label{fig:opt.pois}
\end{center}
\end{figure}
}

\section{Two more examples}
\label{S:more-examples}

\subsection{A standardized mean problem}
\label{SS:normal-example}

Suppose that $X_1,\ldots,X_n$ are independent $\nm(\mu,\sigma^2)$ observations.  The goal is to make inference on $\psi = \mu / \sigma$, the standardized mean, or signal-to-noise ratio.  Following \citet{dempster1963}, we start with a reduction of the full data to the sufficient statistics for $\theta = (\mu,\sigma^2)$, namely $(\Xbar, S^2)$, the sample mean and variance.  Formal IM-based justification for this reduction is available, though we shall not discuss this here.  

For the A-step, we take the association to be
\begin{equation}
\label{eq:normal-a-model}
\Xbar = \mu + n^{-1/2} \sigma U_1 \quad \text{and} \quad S = \sigma U_2, 
\end{equation}
where $U=(U_1,U_2) \sim \prob_U = \nm(0,1) \times \{{\sf ChiSq}(n-1)/(n-1)\}^{1/2}$.  After replacing $\sigma$ in the left-most identity in \eqref{eq:normal-a-model} with $S / U_2$, a bit of algebra reveals that 
\[ n^{1/2} \Xbar / S = (n^{1/2} \psi + U_1) / U_2 \quad \text{and} \quad S = \sigma U_2. \]
For $\theta = (\psi,\sigma)$, make a change of auxiliary variable $v = \phi_\theta(u)$, given by 
\[ v_1 = F_\psi\Bigl( \frac{n^{1/2}\psi + u_1}{u_2} \Bigr) \quad \text{and} \quad v_2 = \frac{\exp(u_2)}{1 + \exp(u_2)}, \]
where $F_\psi$ is the distribution function for $\stt_{n-1}(n^{1/2}\psi)$, a non-central Student-t distribution with $n-1$ degrees of freedom and non-centrality parameter $n^{1/2}\psi$.  Note that the full generality of the parameter-dependent change-of-variables in Corollary~\ref{cor:repar} is needed here.  Then the transformed association is 
\[ n^{1/2} \Xbar / S = F_\psi^{-1}(V_1) \quad \text{and} \quad S = \sigma \log\{V_2 / (1-V_2)\}, \]
and the measure $\prob_V$ on the space of $V = (V_1,V_2)$ has a $\unif(0,1)$ marginal on the $V_1$-space; the distribution on $V_1$-slices of the $V_2$ space can be worked out, but it is not needed in what follows.  For the P-step, we predict $v^\star = \phi_\theta(u^\star)$ with a rectangle predictive random set $\S$ defined by the following set-valued mapping, similar to \eqref{eq:default.prs}:
\begin{equation}
\label{eq:normal-prs}
v = (v_1,v_2) \mapsto \bigl\{v_1': |v_1' - 0.5| < |v_1-0.5| \bigr\} \times [0, 1].
\end{equation}
Optimality considerations along the lines in Section~\ref{SSS:two.sided} could be pursued here, but we choose to keep things simple since analysis of the non-central Student-t distribution is non-trivial.  An important direction of future research is to develop numerical methods for evaluating optimal IMs.  Using a predictive random set that spans the entire $v_2$-space for each $v$ has the effect of ``integrating out'' the nuisance parameter $\sigma$.  For the predictive random set $\S$ in \eqref{eq:normal-prs}, if $z = n^{1/2}\xbar/s$, then the C-step gives the following set $\Theta_x(\S) = \Psi_x(\S) \times \Sigma_x(\S) $ of candidate $(\psi,\sigma)$ pairs:
\begin{equation}
\label{eq:normal-prs-t}
\bigl\{\psi: |F_\psi(z) - 0.5| < |V_1 - 0.5| \bigr\} \times \bigl\{\sigma: \sigma > 0 \bigr\}, \quad V \sim \prob_V.  
\end{equation}
For assertions $A = \{(\psi,\sigma): \sigma > 0\}$ the plausibility function is given by
\[ \pl_x(A) = \prob_{\S}\{\Theta_x(\S) \not\subseteq A^c \} = \prob_{\S}\{\Psi_x(\S) \ni \psi\} = 1 - |2F_\psi(z) - 1|. \]
In this case, the $100(1-\alpha)$\% plausibility interval $\Pi_x(\alpha)$ for $\psi$ is obtained by inverting the inequality $1-|2F_\psi(z)-1| > \alpha$, i.e., $\Pi_x(\alpha) = \{\psi: \alpha/2 < F_\psi(z) < 1-\alpha/2\}$. 

This is exactly the usual frequentist confidence interval based on the sampling distribution of the standardized sample mean; it also agrees with the fiducial intervals obtained by \citet{dempster1963} and \citet{dawidstone1982}.  The standard frequentist approach relies on an informal ``plug-in style'' marginalization, whereas the IM approach above shows exactly how $\sigma$ is ignored via cylinder assertions.  More sophisticated IM marginalization techniques are available, but we do not discuss these here.

\subsection{A many-exponential-rates problem}
\label{SS:poisson.process}

For our last example, we consider a high-dimensional problem.   Suppose that $X=(X_1,\ldots,X_n)$ consists of independent observations $X_i \sim \expo(\theta_i)$, $i=1,\ldots,n$, with unknown rates $\theta_1,\ldots,\theta_n$.  The goal is to give a probabilistic measure of the support in $X=x$ for the assertion $A = \{\theta_1 = \cdots = \theta_n\}$ that the rates are equal.  A version of this problem was also discussed in \citet{mzl2010}, but here we simplify the presentation, emphasize the three-step IM construction, and produce much better results. 

Start, in the A-step, with the association $X_i = U_i / \theta_i$, $i=1,\ldots,n$, where $\prob_U$ is the product measure $\expo(1)^{\times n}$.  Make a change of auxiliary variables $v = \phi(u)$:
\[ v_0 = \textstyle \sum_{i=1}^n u_i \quad \text{and} \quad v_i = u_i / v_0, \quad i=1,\ldots,n. \]
The new vector $v = (v_0,v_1,\ldots,v_n)$ takes values in $\VV = (0,\infty) \times \mathbb{P}_{n-1}$, where $\mathbb{P}_{n-1}$ is the $(n-1)$-dimensional probability simplex in $\RR^n$, and $\prob_V = \prob_U \phi^{-1}$ is the product measure $\gam(n,1) \times \dir_n(1_n)$.  Then the modified association is 
\begin{equation}
\label{eq:pp.amodel}
X_i = V_0 V_i / \theta_i, \quad i=1,\ldots,n, \quad \text{where} \quad V=(V_0,V_1,\ldots,V_n) \sim \prob_V.
\end{equation}
For the P-step, we shall consider the following predictive random set $\S$ characterized by $V \sim \prob_V$ and the set-valued mapping $v \mapsto \{v': h(v') < h(v)\}$.  In this case, we take 
\[ h(v) = -\sum_{i=1}^{n-1} \bigl[a_i \log t_i(v) + b_i \log\{1-t_i(v)\} \bigr], \]
with $t_i(v) = \sum_{j=1}^i v_i$, $a_i = 1/(n-i-0.3)$, and $b_i = 1/(i-0.3)$.  A few remarks on this choice of $\S$ are in order.  First, it follows from Corollary~\ref{thm:prs} that $\S$ is efficient.  Second, the random vector $( t_1(V), \ldots, t_{n-1}(V) )$, for $V \sim \prob_V$, has the distribution of a vector of $n-1$ sorted $\unif(0,1)$ random variables, and \citet[][Sec.~3.4.2]{jzhang-thesis} shows that $\S$ provides an easy-to-compute alternative to the well-performing hierarchical predictive random set for predicting sorted uniforms used in \citet{mzl2010}.  Finally, that the first component $v_0$ of $v$ is essentially ignored in $\S$ is partly for convenience, and partly because $v_0$ is related to the overall scale of the problem which is irrelevant to the assertion $A$ of interest.  

For the C-step, combining the observed data, the association model \eqref{eq:pp.amodel}, and the predictive random set $\S$ above, we get the following random set for $\theta$:
\[ \Theta_x(\S) = \{\theta: h(v(x,\theta)) < h(V) \}, \quad V \sim \prob_V, \]
where $v(x,\theta) = (\theta_1 x_1,\ldots,\theta_nx_n) / \sum_{j=1}^n \theta_j x_j$.  Since the assertion $A = \{\theta_1 = \cdots = \theta_n\}$ is a one-dimensional subset of $\Theta$, the belief function is zero.  It is also important to note that when $\theta$ is a constant vector, $v(x,\theta)$ is independent of that constant, i.e., $v(x,\theta) = v(x,1_n)$, which greatly simplifies computation of the plausibility function at $A$.  Indeed, 
\[ \pl_x(A) = \prob_V\{h(V) > h(v(x,1)) \}, \]
which can easily be evaluated using Monte Carlo.  As described in Section~\ref{SS:freq}, the level $\alpha$ IM-based tests rejects the assertion $A$ if and only if $\pl_x(A) \leq \alpha$.  

For illustration, we compare our results with those of \citet{mzl2010}.  They consider the basic likelihood ratio test, which is based on the test statistic $\bigl\{ \bigl(\textstyle \prod_{i=1}^n x_i\bigr)^{1/n} / \xbar \bigr\}^n$.  They also consider a different sort of IM solution, based on thresholding the plausibility function, but with a default type of predictive random set that uses a Kullback--Leibler neighborhood for predicting the component $(V_1,\ldots,V_n)$ of $V$.  We compare the power of these three tests in several different cases.  In each setup, $n = n_1+n_2=100$ observations are available, but the first $n_1$ exponential rates equal 1 while the last $n_2$ equal $\theta$.  Figure~\ref{fig:exp.sims} shows the power functions over a range of $\theta$ values for two configurations of $(n_1,n_2)$.  Here we see that, in both cases, the likelihood ratio and old IM tests have similar power, possibly because of the common connection to the Kullback--Leibler divergence.  On the other hand, the new IM-based test presented above has strikingly larger power than the other two.  This substantial improvement in power is likely due to the close relationship between our choice of $\S$ and the assertion of interest.  So while the comparison between the new IM results and those of the other ``default'' methods is not entirely fair, it is interesting to see that an assertion-specific choice of predictive random set can lead to drastically improved performance.  

\begin{figure}
\begin{center}
\subfigure[$(n_1,n_2) = (50, 50)$]{\scalebox{0.6}{\includegraphics{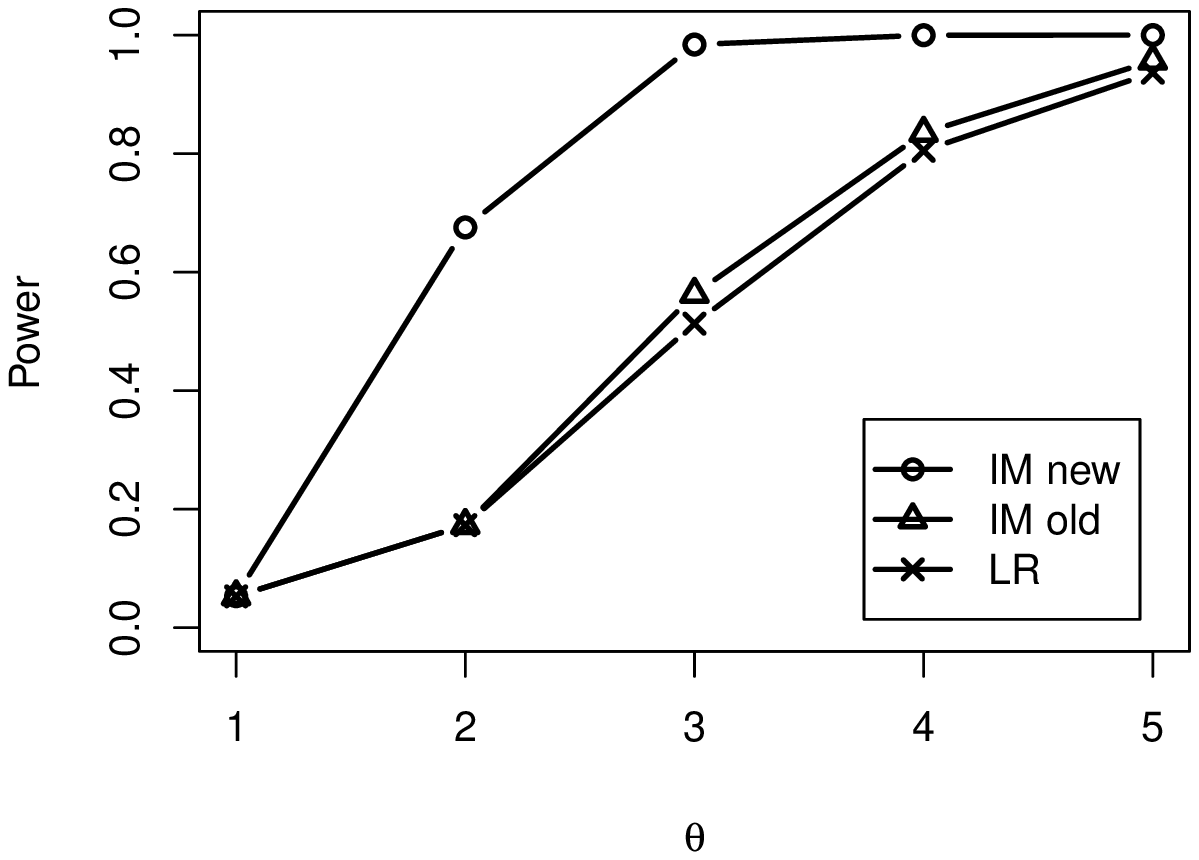}}}
\subfigure[$(n_1,n_2) = (10, 90)$]{\scalebox{0.6}{\includegraphics{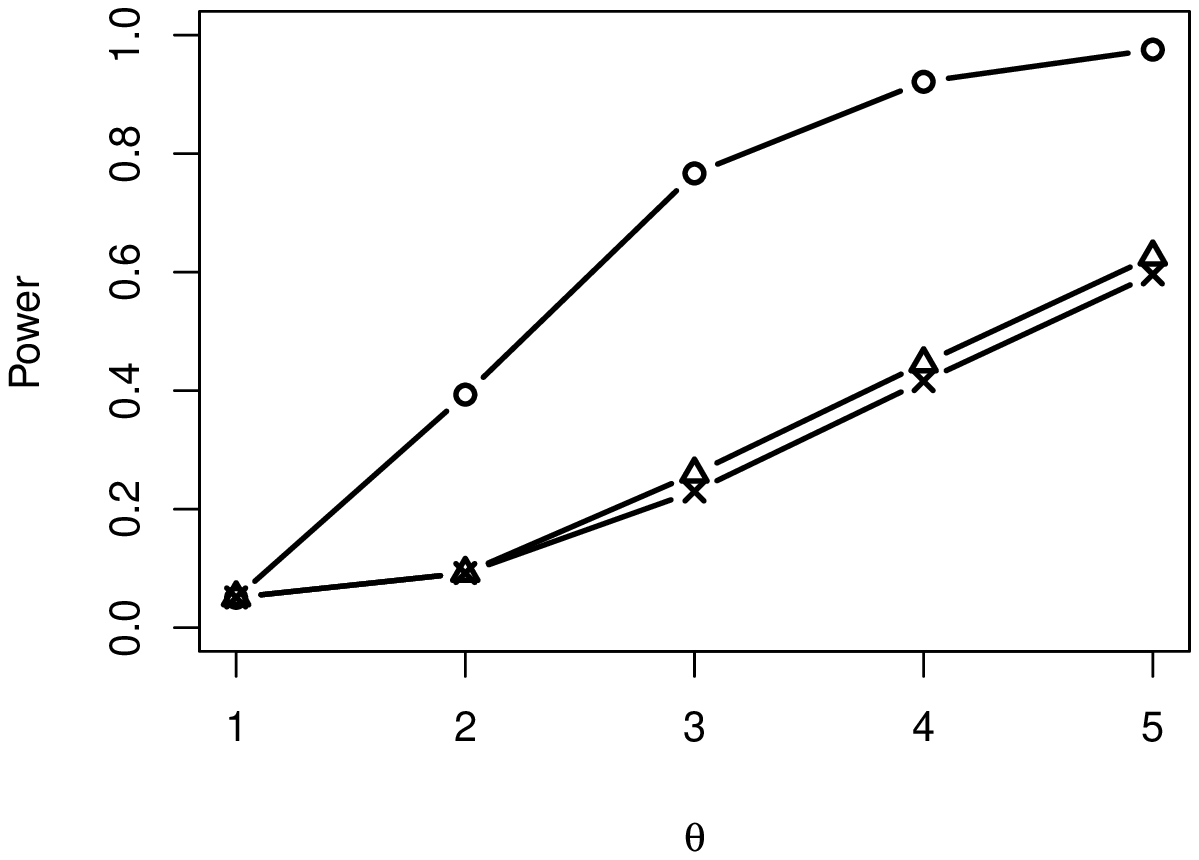}}}
\caption{Estimated powers of the likelihood ratio and two IM-based tests for the simulation described in Section~\ref{SS:poisson.process}. Here $\theta$ is the ratio of the rate of the last $n_2$ observations to that of the first $n_1$. }
\label{fig:exp.sims}
\end{center}
\end{figure}

\section{Discussion}
\label{S:discuss}

The conversion of experience to knowledge is fundamental to the advancement of science, and statistical inference plays a crucial role.  For ages, there has been disagreement about which statistical paradigm to choose.  Both the frequentist and Bayesian paradigms have their own set of advantages and disadvantages, so it would be worthwhile to identify something new which combines the respective advantages but loses, or at least weakens, the disadvantages.  Here we have described a three-step procedure to construct IMs for prior-free, post-data probabilistic inference, and proved that IMs yield frequency-calibrated probabilities under very general conditions.  The point is that the values of the corresponding belief/plausibility function are meaningful both within and across experiments, accomplishing both the frequentist and Bayesian goals simultaneously.  

The proposed IM approach is surely new, but since new is not always better, it is natural to ask what is the benefit of using IMs.  Our response is that, although it will take time for users to familiarize themselves with the thought process, the IM framework is logical, intuitive, and able to produce meaningful and frequency-calibrated probabilistic measures of uncertainty about $\theta$ without a prior distribution.  The latter property is something that no other inferential framework is able to achieve.  

Admittedly, the final IM depends on the user's choice of association and predictive random set, but we do not believe that this is particularly damning.  Section~\ref{S:optimality} laid the foundation for a theory of optimal predictive random sets, and further efforts to develop ``default'' predictive random sets are ongoing, particularly for multi-parameter problems.  But a case can be made to prefer the ambiguity of the choice of predictive random set over that of a frequentist's choice of statistic or Bayesian's choice of prior.  The point is that neither a frequentist sampling distribution nor a Bayesian prior distribution adequately describes the source of uncertainty about $\theta$.  As we argued above, this uncertainty is fully characterized by the fact that, whatever the association, the value of $u^\star$ is missing.  Therefore, it seems only natural to prefer the IM framework that features a direct attack on the source of uncertainty over another that attacks the problem indirectly.  Moreover, as was demonstrated in Section~\ref{SS:poisson.process}, choosing the predictive random set that depends on the problem and/or assertion of interest can lead to drastically improved results.  

We note that differences between IM outputs from different predictive random sets are slight for assertions involving one-dimensional quantities.  However, for high-dimensional auxiliary variables, the choice of predictive random set deserves special attention.  In such cases, our approach is to construct predictive random sets for functions of auxiliary variables that are most relevant to the assertions of interest.  This leads to a practically useful auxiliary variable dimension reduction.  It is interesting that this approach has some close connections to Fisher's theory of sufficient statistics \citep{imcond}.  For nuisance parameter problems, like those in Section~\ref{S:more-examples}, there is a different form of dimension reduction required \citep{immarg}.  

Of course, compared to the well-developed Bayesian and frequentist methods, IMs have many open problems. Both theoretical work and applications have shown that the IM framework is promising.  Given the attractive properties of IMs developed here and in the references above, we expect to see more exciting advancements in IMs or new inferential frameworks \citep[e.g.,][]{plausfn} that are probabilistic and have desirable frequency properties.

\section*{Acknowledgments}

The authors are grateful for the efforts and thoughtful criticisms of the Editor, Associate Editor, and two referees.  This work is partially supported by the U.S. National Science Foundation, grants DMS-1007678, DMS-1208841, and DMS-1208833.

\appendix

\section{Details from Section~\ref{SSS:two.sided}}
\label{S:details}

If we assume that $\bel_X(\{\theta_0\}^c;\S) \sim \unif(0,1)$ under $\prob_{X|\theta_0}$, then there exists a collection of measurable subsets $\XX(\alpha) \subseteq \XX$, depending implicitly on $\theta_0$ and $\S$, such that, for each $\alpha$, $\prob_{X|\theta_0}\{\XX(\alpha)\} = \alpha$, and $\bel_x(\{\theta_0\}^c;\S) \leq \alpha$ iff $x \in \XX(\alpha)$.  It follows that, for any $\theta$, 
\[ \prob_{X|\theta}\{\bel_X(\{\theta_0\}^c; \S) \leq \alpha\} = \psi_\alpha(\theta) :=\int_{\XX(\alpha)} f_\theta(x) \,dx. \]
By definition, $\psi_\alpha(\theta_0) = \alpha$.  Now, \eqref{eq:ordering} is equivalent to $\psi_\alpha(\theta) < \psi_\alpha(\theta_0)$ for all $\alpha$, or, to put it another way, $\psi_\alpha(\theta)$ is maximized at $\theta=\theta_0$ for all $\alpha$.  Under the stated regularity conditions, this maximization is equivalent to the claim that, for all $\alpha \in (0,1)$, the first and second derivatives of $\psi_\alpha(\theta)$ at $\theta=\theta_0$ satisfy
\begin{align}
\psi_\alpha'(\theta_0) & = \int_{\XX(\alpha)} T_{\theta_0}(x) f_{\theta_0}(x) \,dx = 0, \label{eq:balance1} \\
\psi_\alpha''(\theta_0) & = \int_{\XX(\alpha)} V_{\theta_0}(x) f_{\theta_0}(x) \,dx < 0. \label{eq:balance2}
\end{align}
Since $T_{\theta_0}(X)$ has mean zero under $\prob_{X|\theta_0}$, we can see that \eqref{eq:balance1} requires $\XX(\alpha)$ to be somehow symmetric, or balanced, with respect to the distribution of $T_{\theta_0}(X)$.  We, therefore, refer to \eqref{eq:balance1} as the \emph{score-balance} condition.  This condition, expressed in terms of $\XX(\alpha)$ in \eqref{eq:balance1}, can be traced back to a corresponding condition on the predictive random set.  

Let us now assume that $\S_B$ is such that $\bel_X(\{\theta_0\}^c;\S_B) \sim \unif(0,1)$ under $\prob_{X|\theta_0}$; in the main text we construct a particular score-balanced predictive random set and show that that this assumption holds.  Then, as we argued above, for any $\alpha \in (0,1)$, there exists $t(\alpha) \in \TT$ such that $\bel_x(\{\theta_0\}^c; \S_B) \leq \alpha$ iff $T_{\theta_0}(x) \in B_{t(\alpha)}$.  In this case, for any $\theta$, 
\[ \prob_{X|\theta}\{\bel_X(\{\theta_0\}^c; \S_B) \leq \alpha\} = \int_{T_{\theta_0}(x) \in B_{t(\alpha)}} f_\theta(x) \,dx, \]
and the right-hand side is $\psi_\alpha(\theta)$ as defined previously.  From the definition of $B$, differentiating under the integral sign reveals that \eqref{eq:balance1} holds.  We can now prove 

\begin{prop}
\label{prop:balance1}
Focus on predictive random sets $\S$ such that $\bel_X(\{\theta_0\}^c; \S) \sim \unif(0,1)$ under $\prob_{X|\theta_0}$.  Then condition \eqref{eq:ordering} holds for all $\theta$ in a neighborhood of $\theta_0$ iff the predictive random set $\S=\S_B$ is score-balanced and 
\begin{equation}
\label{eq:balance2a}
\int_{T_{\theta_0}(x) \in B_t} V_{\theta_0}(x) f_{\theta_0}(x)\,dx < 0, \quad \forall\; t \in \TT.
\end{equation} 
\end{prop}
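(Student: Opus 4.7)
The proof plan is to convert the neighborhood version of \eqref{eq:ordering} into a local-extremum statement about the maps
\[ \theta \mapsto \psi_\alpha(\theta) = \prob_{X|\theta}\{\bel_X(\{\theta_0\}^c;\S) \leq \alpha\}, \]
and then read off the equivalent score-balance and integral-negativity conditions from a first- and second-derivative analysis at $\theta_0$. The groundwork is essentially laid in the discussion preceding the proposition: since $\bel_X(\{\theta_0\}^c;\S) \sim \unif(0,1)$ under $\prob_{X|\theta_0}$, we have $\psi_\alpha(\theta_0) = \alpha$ for every $\alpha$, and \eqref{eq:ordering}, restricted to a neighborhood, is exactly the statement that $\theta_0$ is a strict local maximizer of each $\psi_\alpha$.

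Under the stated regularity (differentiation under the integral and smoothness of $f_\theta$ in $\theta$), Taylor expansion at $\theta_0$ together with $\psi_\alpha(\theta_0)=\alpha$ reduces ``$\theta_0$ is a strict local maximizer of $\psi_\alpha$'' to the pair of conditions $\psi_\alpha'(\theta_0)=0$ and $\psi_\alpha''(\theta_0)<0$. By the formulas already recorded in the preceding discussion, these are \eqref{eq:balance1} and \eqref{eq:balance2} respectively. Thus, in the generic predictive-random-set setting, \eqref{eq:ordering} holding in a neighborhood of $\theta_0$ is equivalent to the conjunction \eqref{eq:balance1}--\eqref{eq:balance2} holding for every $\alpha\in(0,1)$.

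The remaining step is to translate the abstract upper-level sets $\XX(\alpha) = \{x: \bel_x(\{\theta_0\}^c;\S) \leq \alpha\}$ into the $B_t$ language. For $\S=\S_B$, the main-text computation gives $\bel_x(\{\theta_0\}^c;\S_B) = \prob_{X|\theta_0}\{T_{\theta_0}(X) \in B_{T_{\theta_0}(x)}\}$; together with the nesting of $\{B_t\}$ and the assumed uniform distribution of the belief function under $\prob_{X|\theta_0}$, this lets us write $\XX(\alpha) = \{x : T_{\theta_0}(x) \in B_{t(\alpha)}\}$ for some map $\alpha \mapsto t(\alpha)$. As $\alpha$ ranges over $(0,1)$, $t(\alpha)$ sweeps out $\TT$ (up to $\prob_{X|\theta_0}$-null sets), so \eqref{eq:balance1} for all $\alpha$ is equivalent to \eqref{eq:balance1a} (the score-balance condition on $B$), and \eqref{eq:balance2} for all $\alpha$ is equivalent to \eqref{eq:balance2a}.

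The main technical obstacle will be the fine print in the second-order step: a strict local maximum of $\psi_\alpha$ at $\theta_0$ only forces $\psi_\alpha''(\theta_0) \le 0$, whereas the proposition demands the strict inequality \eqref{eq:balance2a}. The way I would bridge this is to use the score-balance identity $\psi_\alpha'(\theta_0)=0$ together with smoothness of $\psi_\alpha$ in $\theta$ to rule out the degenerate case $\psi_\alpha''(\theta_0)=0$, which would produce a direction where $\psi_\alpha(\theta)\ge\alpha$ for some $\theta$ arbitrarily close to $\theta_0$ and contradict the strict form of \eqref{eq:ordering}. A smaller bookkeeping concern is uniformity of the neighborhood in $\alpha$; this is handled by restricting $\alpha$ to compact subintervals of $(0,1)$ and invoking the assumed smoothness of $f_\theta$, exactly as is implicitly done in the regularity hypotheses used to justify the differentiations above.
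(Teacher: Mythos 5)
Your route is the same as the paper's: a second-order Taylor expansion of $\psi_\alpha(\theta)=\prob_{X|\theta}\{\bel_X(\{\theta_0\}^c;\S)\leq\alpha\}$ about $\theta_0$, with the first-order term identified (via the translation $\XX(\alpha)=\{x:T_{\theta_0}(x)\in B_{t(\alpha)}\}$) with the score-balance condition \eqref{eq:balance1a} and the second-order term with \eqref{eq:balance2a}. The paper's displayed proof in fact only carries out the sufficiency direction (first term vanishes by score-balance, second term is negative by \eqref{eq:balance2a}, hence $\psi_\alpha(\theta)<\alpha$ locally), leaving necessity to the derivative discussion preceding the proposition; your write-up covers both directions along the same lines, so on the whole it matches.

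The one place you go beyond the paper is also the one step that does not work as stated. You claim that $\psi_\alpha'(\theta_0)=0$ together with smoothness rules out $\psi_\alpha''(\theta_0)=0$, because the degenerate case ``would produce a direction where $\psi_\alpha(\theta)\geq\alpha$ arbitrarily close to $\theta_0$.'' That inference is false: a strict local maximum is perfectly compatible with a vanishing second derivative (take $\psi_\alpha(\theta)=\alpha-(\theta-\theta_0)^4$), so the strict inequality in \eqref{eq:balance2a} cannot be extracted from the strict form of \eqref{eq:ordering} by this argument. To be fair, the paper glosses over exactly the same point when it asserts that the maximization ``is equivalent to'' $\psi_\alpha'(\theta_0)=0$ and $\psi_\alpha''(\theta_0)<0$; a clean statement of the necessity direction either weakens \eqref{eq:balance2a} to a non-strict inequality or adds a nondegeneracy hypothesis excluding $\psi_\alpha''(\theta_0)=0$. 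Your remark about uniformity of the neighborhood over $\alpha$ is a reasonable piece of bookkeeping that the paper also leaves implicit.
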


\begin{proof}[\indent Proof]
Take $\theta$ close enough to $\theta_0$ such that the remainder terms in a second-order Taylor approximation of $\psi_\alpha(\theta)$ about $\theta=\theta_0$ can be ignored.  That is, for any $\alpha$, 
\begin{align*}
\psi_\alpha(\theta)-\psi_\alpha(\theta_0) & = \int_{T_{\theta_0}(x) \in B_{t(\alpha)}} T_{\theta_0}(x) f_{\theta_0}(x) \,dx \cdot (\theta-\theta_0) \\
& \qquad + \frac12 \int_{T_{\theta_0}(x) \in B_{t(\alpha)}} V_{\theta_0}(x) f_{\theta_0}(x)\,dx \cdot (\theta-\theta_0)^2. 
\end{align*}
The first terms vanishes and the second term is negative by \eqref{eq:balance2a}.  Therefore $\psi_\alpha(\theta) < \psi_\alpha(\theta_0)$ for all $\alpha$ and, hence, \eqref{eq:ordering} holds for all $\theta$ in a neighborhood of $\theta_0$.  
\end{proof}

\bibliographystyle{/Users/rgmartin/Research/TexStuff/asa}
\bibliography{/Users/rgmartin/Research/mybib}

\pagebreak

\section{Corrections---added post-publication}
\label{S:correction}


\theoremstyle{plain} 
\newtheorem*{newthm1}{\indent Theorem~1$'$}
\newtheorem*{newthm3}{\indent Theorem~3$'$}
\newtheorem*{lem0}{\indent Lemma~0}

\subsection{Correction of Theorem~1}

In the main text, for validity of the predictive random set $\S$, the support $\SS$ was assumed only to be nested, i.e., for any $S,S' \in \SS$, either $S \subseteq S'$ or $S' \subseteq S$.  However, some additional technical conditions are required for the proof to go through.  

Fix a topology on the auxiliary variable space $\UU$, and let the $\sigma$-algebra defined there contain all the open sets.  In addition to being nested, we shall assume that $\SS$ contains both $\varnothing$ and $\UU$, and that all of its contents are closed subsets of $\UU$.  These additional requirements result in no real loss of generality.  Indeed, those predictive random sets in Corollary~1 of the main text already satisfy these.  These extra conditions also make the statement and proof of the theorem more transparent.   

\begin{newthm1}
Let $\SS$ be a nested collection of closed $\prob_U$-measurable subsets of $\UU$ that contains $\varnothing$ and $\UU$.  Define a predictive random set $\S$, with distribution $\prob_\S$, supported on $\SS$, such that 
\[ \prob_\S\{\S \subseteq K\} = \sup_{S \in \SS: S \subseteq K} \prob_U(S), \quad K \subseteq \UU. \]
Then $\S$ is valid in the sense of Definition~1 in the main text.  
\end{newthm1}

\begin{proof}
Set $Q(u)=\prob_\S\{\S \not\ni u\}$.  For any $\alpha \in (0,1)$, let $S_\alpha$ be the smallest $S \in \SS$ such that $\prob_\S\{\S \subseteq S\} \equiv \prob_U(S) \geq 1-\alpha$.  In particular, $S_\alpha = \bigcap \{S \in \SS: \prob_U(S) \geq 1-\alpha\}$.  Since each $S$ is closed, so is $S_\alpha$; it is also measurable by our assumptions about the richness of the $\sigma$-algebra on $\UU$.  The key observation is that $Q(u) > 1-\alpha$ iff $u \in S_\alpha^c$.  Therefore, by continuity of $\prob_U$ from above, we get 
\[ \prob_U\{Q(U) > 1-\alpha\} = \prob_U(S_\alpha^c) = 1-\prob_U(S_\alpha) = 1-\lim \prob_U(S), \]
where the limit is over all $S$ decreasing to $S_\alpha$.  By construction, each such $S$ satisfies $\prob_U(S) \geq 1-\alpha$.  So, finally, we get $\prob_U\{Q(U) > 1-\alpha\} \leq \alpha$ and, since $\alpha$ is arbitrary, the claimed validity is proved.  
\end{proof}

\subsection{Correction/extension of Theorem~3}  

Theorem~3 in the main text says that nested predictive random sets are more efficient than those which are not nested.  However, the nested predictive random set constructed in that theorem is not necessarily valid.  Since validity is a key to the IM analysis, it would be desirable if the new nested predictive random set $\S'$ was also valid.  We accomplish this in Theorem~3$'$ below.  First, we need the following lemma.

\begin{lem0}
On a space $\UU$ equipped with probability $\prob_U$, let $\S$ be a valid predictive random set for $U \sim \prob_U$.  Choose a collection of $\prob_U$-measurable subsets $\{\UU_x: x \in \XX\}$ of $\UU$, and set $\eta(x) = \prob_\S\{\S \subseteq \UU_x\}$.  Then 
\[ \inf_{x \in \XX_0} \eta(x) \leq \prob_U \Bigl\{ \bigcap_{x \in \XX_0} \UU_x \Bigr\} \]
for any subset $\XX_0$ of $\XX$ such that $\bigcap_{x \in \XX_0} \UU_x$ is $\prob_U$-measurable.  
\end{lem0}

\begin{proof}
First, note that if $u \in \UU_x^c$, then $Q(u) \equiv \prob_\S\{\S \not\ni u\} \geq \eta(x)$.  Therefore, if $u \in \bigcup_{x \in \XX_0} \UU_x^c$, then $Q(u) \geq \inf_{x \in \XX_0} \eta(x)$.  This argument implies
\[ \prob_U\Bigl\{ Q(U) \geq \inf_{x \in \XX_0} \eta(x) \Bigr\} \geq \prob_U \Bigl\{ \bigcup_{x \in \XX_0} \UU_x^c \Bigr\} = 1 - \prob_U\Bigl\{ \bigcap_{x \in \XX_0} \UU_x \Bigr\}. \]
Since $\S$ is valid, we have 
\[ \prob_U\Bigl\{ Q(U) \geq \inf_{x \in \XX_0} \eta(x) \Bigr\} \leq 1 - \inf_{x \in \XX_0} \eta(x); \]
Combining this with the inequality in the previous display, we get 
\[ 1-\inf_{x \in \XX_0} \eta(x) \geq 1 - \prob_U\Bigl\{ \bigcap_{x \in \XX_0} \UU_x \Bigr\}, \]
which implies $\inf_{x \in \XX_0} \eta(x) \leq \prob_U\{ \bigcap_{x \in \XX_0} \UU_x \}$.  
\end{proof}

A measurability question was overlooked in the main text.  In particular, the sets in \eqref{eq:intersect} below are not automatically measurable.  To confirm this, we shall add one more modification; note that this is not needed if the sampling model $\prob_{X|\theta}$ is discrete.  To start, for the given topology on $\UU$, keep the same assumptions about the corresponding $\sigma$-algebra as above.  Now, recall the a-events $\UU_x(A)=\{u \in \UU: \Theta_x(u) \subseteq A\}$ defined in the proof of Proposition~1 in the main text.  Here we shall replace $\UU_x(A)$ with its closure.  This does not affect any properties of the resulting belief function when $\prob_U$ is non-atomic.  In all the examples we have considered, $\prob_U$ can be taken as continuous; this is a particularly convenient choice, in light of Corollary~1 in the main text.  

\begin{newthm3}
Suppose that either $\XX$ is a discrete space, or that the assumptions in the previous paragraph hold.  Fix $A \subseteq \Theta$ and assume condition $(2.10)$ in the main text.  Given any valid predictive random set $\S$, there exists a nested and valid predictive random set $\S'$ such that $\bel_x(A;\S') \geq \bel_x(A;\S)$ for each $x \in \XX$.  
\end{newthm3}

\begin{proof}
For the given $A$ and $\S$, set $b(x) \equiv \bel_x(A;\S)$.  Define a collection $\SS' = \{S_x': x \in \XX\}$ of subsets of $\UU$ as follows:
\begin{equation}
\label{eq:intersect}
S_x' = \bigcap_{y \in \XX: b(y) \geq b(x)} \UU_y(A), 
\end{equation}
where $\UU_x(A)$ is the new closed a-event.  If necessary, add $\varnothing$ and $\UU$ to $\SS'$ to satisfy the requirement in Theorem~1$'$.  This collection $\SS'$ will serve as the support for the new predictive random set $\S'$.  First, we can see that $\SS'$ is nested: if $b(y) \geq b(x)$, then $S_y \supseteq S_x$.  Second, since the new a-events are closed, each $S_x'$ in \eqref{eq:intersect} is closed and, hence, $\prob_U$-measurable.  Third, define the measure $\prob_{\S'}$ for $\S'$ to satisfy
\[ \prob_{\S'}\{\S' \subseteq K\} = \sup_{x: S_x' \subseteq K} \prob_U(S_x'). \]
According to Theorem~1$'$, the new $\S'$ is valid.  Moreover, by Lemma~0 and the definition of $S_x'$, we have 
\begin{equation}
\label{eq:newthm3.bound}
\prob_U(S_x') \geq \inf_{y \in \XX: b(y) \geq b(x)} b(y) = b(x) \equiv \bel_x(A;\S). 
\end{equation}
Finally, we have a comparison of the belief functions corresponding to $\S$ and $\S'$:
\[ \bel_x(A;\S') = \prob_{\S'}\{\S' \subseteq \UU_x(A)\} \geq \prob_{\S'}\{\S' \subseteq S_x'\} = \prob_U(S_x') \geq \bel_x(A;\S); \]
the first inequality follows from monotonicity of $\prob_{\S'}\{\S' \subseteq \cdot\}$ and the fact that $S_x' \subseteq \UU_x(A)$ for each $x$, and the second inequality follows from \eqref{eq:newthm3.bound}.    
\end{proof}

\end{document}